\theoremstyle{plain}
\newtheorem{theorem}{Theorem}[section]
\newtheorem{lemma}[theorem]{Lemma}
\newtheorem{proposition}[theorem]{Proposition}
\newtheorem{corollary}[theorem]{Corollary}
\newtheorem{definition}[theorem]{Definition}
\newtheorem{remark}[theorem]{Remark}
\numberwithin{equation}{section}
\newcommand{\Nb}  {{\mathbb N}}
\newcommand{\Zb}  {{\mathbb Z}}
\newcommand{\Fs} {{\mathcal F}}
\newcommand{\Ss} {{\mathcal S}}
\newcommand{\Vs} {{\mathcal V}}
\newcommand{\Ys} {{\mathcal Y}}
\DeclareMathOperator{\Var}{Var}
\renewcommand{\phi}{\varphi}
\newcommand{\De}{\Delta}
\newcommand{\la}{\lambda}
\newcommand{\ind}{1\!\kern-1pt \mathrm{I}}
\newcommand{\rsto}{]\!\kern-1.8pt ]}
\newcommand{\lsto}{[\!\kern-1.7pt [}
\newcommand{\cF}{\mathcal{F}}
\newcommand\F{\mbox{I\kern-2pt F}}
\title[The large fractional binary market]{Strong asymptotic arbitrage in the large fractional binary market}
\author{Fernando Cordero}
\address{Faculty of Technology, University of Bielefeld, Universit\"{a}tsstr. 25, 33615 Bielefeld, Germany}
\email{fcordero@techfak.uni-bielefeld.de}
\author{Lavinia Perez-Ostafe}
\address{Department of Mathematics, ETH Zurich, R\"{a}mistrasse 101, 8092 Zurich, Switzerland}
\email{lavinia.perez@math.ethz.ch}
\date{\today}%
\begin{document}
\subjclass[2010]{60G22, 60G40, 60G50, 91B24, 91B26}
\keywords{Fractional Brownian motion, fractional binary markets, asymptotic arbitrage, transaction costs, law of large numbers, stopping time}


\begin{abstract}
 We study, from the perspective of large financial markets, the asymptotic arbitrage opportunities in a sequence of binary markets approximating the fractional Black-Scholes model. This approximating sequence was introduced by Sottinen and named fractional binary market. The large financial market under consideration does not satisfy the standard assumptions of the theory of asymptotic arbitrage. For this reason, we follow a constructive approach to show first that a strong type of asymptotic arbitrage exists in the large market without transaction costs. Indeed, with the help of an appropriate version of the law of large numbers and a stopping time procedure, we construct a sequence of self-financing trading strategies, which leads to the desired result. Next, we introduce, in each small market, proportional transaction costs, and we construct, following a similar argument, a sequence of self-financing trading strategies providing a strong asymptotic arbitrage when the transaction costs converge fast enough to $0$.
\end{abstract}
\maketitle
\section{Introduction}
The notion of large financial market was introduced by Kabanov and Kramkov in \cite{Kakra94} as a sequence of ordinary security market models. A suitable property for such kind of markets is the absence of asymptotic arbitrage opportunities. In the frictionless case, a standard assumption is that each small market is free of arbitrage. If, in addition, the small markets are complete, then the existence of asymptotic arbitrage opportunities is related to some contiguity properties of the sequence of equivalent martingale measures (see \cite{Kakra94}). These results are extended to the case of incomplete markets by Klein and Schachermayer in \cite{K:S:1996, K:Sch:1996} and by Kabanov and Kramkov in \cite{Kab:Kra:1998}. In the transaction costs case, i.e. when each small market is subject to proportional transaction costs, the standard assumption is that each small market is free of arbitrage under arbitrarily small transaction costs. In this context, characterizations of the existence of asymptotic arbitrage, similar to those in the frictionless case, can be found in \cite{Kl:Le:Pe}. 

In this paper, we consider a non-standard large financial market, i.e. a sequence of market models which admit arbitrage for sufficiently small transaction costs, and we study its asymptotic arbitrage opportunities. Our large financial market is given by the sequence of binary markets approximating the fractional Black-Scholes model introduced by Sottinen in \cite{Sotti}. We refer to this large financial market as the large fractional binary market. Similarly, we call $N$-fractional binary market to the $N$-period binary market in the sequence. Sottinen proves in \cite{Sotti} that, for $N$ sufficiently large, the $N$-fractional binary market admits arbitrage. From the results in \cite{CKO}, we conclude that the smallest transaction cost, $\lambda_c^N$, needed to eliminate the arbitrage in the $N$-fractional binary market is strictly positive. Moreover, from \cite{CP}, we know that $\lambda_c^N$ converges to $1$, a result which contrasts with the fact that the fractional Black-Scholes model is free of arbitrage under arbitrarily small transaction costs. This is not a true contradiction, since the arbitrage strategies constructed in \cite{CP} provide profits, under big transaction costs, with probabilities vanishing in the limit. As explained in \cite{CP}, a more appropriate way to compare the arbitrage opportunities in the sequence of fractional binary markets with the arbitrage opportunities in the fractional Black-Scholes market is to study the problem for the former from the perspective of the large financial markets. A first step in this direction was done in \cite{CP}, where the authors study the existence of asymptotic arbitrage of first kind (AA1) and of second kind (AA2) under the restriction of using only 1-step self-financing strategies. In this respect, it has been shown the existence of $1$-step AA1 in the large fractional binary market when the transaction costs are such that $\lambda_N=o(1/N^H)$. If, instead, $\lambda_N\sqrt{N}$ converges to infinity, then no 1-step asymptotic arbitrage of any kind appears in the model. Moreover, one can also show that, when the Hurst parameter $H$ is chosen close enough to $1/2$, then even in the frictionless large market there is no 1-step AA2.

In the present work, using more general self-financing trading strategies, we aim to construct, for an appropriate sequence of transaction costs, a strong asymptotic arbitrage, i.e. the possibility of getting arbitrarily rich with probability arbitrarily close to one while taking a vanishing risk. This problem can be viewed as a continuation of the study of asymptotic arbitrage initiated in \cite{CP}, in the sense that our trading strategies are chosen beyond the $1$-step setting of \cite{CP}. Not only that, the existence of this form of asymptotic arbitrage is stronger than AA1 and AA2 and, moreover, is obtained for any Hurst parameter $H>1/2.$

First, in the case of frictionless markets, we construct a candidate sequence of self-financing strategies, and we show that the value process of the portfolio can be expressed as a sum of dependent random variables. Due to this dependency, special versions of the law of large numbers are needed in order to conclude on the asymptotic behaviour of the value process at maturity. In this respect, with the help of a law of large numbers for mixingales (see \cite{An}), we prove that our strategies provide a strictly positive profit with probability strictly close to one. In order to construct a strong asymptotic arbitrage, we modify the sequence of trading strategies, first to ensure that the admissibility condition is satisfied and then to obtain an arbitrarily big profit. Indeed, using a well chosen sequence of stopping times, we stop the self-financing strategies at the first time the admissibility condition fails to hold. The resulting sequence of trading strategies paves the way to a strong asymptotic arbitrage. When transaction costs are taken into account, we show, following a similar argument, the existence of a strong asymptotic arbitrage when the transaction costs are of order $o(\sqrt{\log{N}}/N^{(2H-1/2)\wedge(H+1/2)})$. In direct comparison with the results of \cite{CP}, one can observe that, even if, when using a sequence of $1$-step self financing trading strategies, the rate of convergence of the transaction costs leading to an AA1 is better, this won't allow us though to obtain an AA2.

We emphasize that the methods presented in this work are not restricted to the chosen large financial market. To the contrary, since, in discrete time setting, the value process can be written as a sum of random variables, we believe that these techniques may be applicable also for other examples of discrete large financial markets. This is indeed the case whenever we dispose of an appropriate law of large numbers theorem and of a maximal inequality for the value process, in a similar manner as seen in our results.

The paper is structured as follows. In Section \ref{S2} we introduce the framework of our results, starting with the definition of a fractional binary market. We end this part with a short presentation of the concept of strong asymptotic arbitrage. In Section \ref{S3} and Section \ref{S4} the main results are concentrated. In Section \ref{S3}, we introduce a sequence of self-financing strategies leading to a strictly positive profit with probability close to one. A strong asymptotic arbitrage is then constructed using the aforementioned stopping procedure. In Section \ref{S4}, we extend this construction to the case when transaction costs are considered in the model.

\section{Preliminaries}\label{S2}

\subsection{Fractional binary markets}\label{bmao}
In this section, we briefly recall the so-called fractional binary markets, which were defined by Sottinen in \cite{Sotti} as a sequence of discrete markets approximating the fractional Black-Scholes model. 

First, we introduce the fractional Black-Scholes model. This continuous market takes the same form as the classical Black-Scholes model with the difference that the randomness of the risky asset is described by a fractional Brownian motion and not by a standard Brownian one. More precisely, the dynamics of the bond and of the stock are given by:
\begin{equation}\label{fbse}
 dB_t=r(t)\,B_t\, dt\quad\textrm{and}\quad dS_t=(a(t)+\sigma\,dZ_t)\, S_t,
\end{equation}
where $\sigma>0$ is a constant representing the volatility and $Z$ is a fractional  Brownian motion of Hurst parameter $H>1/2$. The functions $r$ and $a$ are deterministic and represent the interest rate and the drift of the stock. We assume in the sequel that the bond plays the role of the num\'eraire and, hence, $B_t=1$ for all times $t$ (i.e. $r=0$), and that $a$ is continuously differentiable.

Motivated by the construction of an easy example of arbitrage related to the fractional Black-Scholes model, Sottinen came up with the idea to express this special type of Black-Scholes model as a limiting process of a sequence of discrete markets with a binary structure. For this scope, he shows a Donsker-type theorem, in which the fractional Brownian motion is approximated by an inhomogeneous random walk. From this point on, he constructs a discrete model, called ``fractional binary market'', approximating \eqref{fbse}. Based on the results in \cite{CKP}, we provide here a simplified, but equivalent, presentation of these binary models. 

Let $(\Omega,\Fs, P)$ be a finite probability space and consider a sequence of i.i.d. random variables $(\xi_i)_{i\geq 1}$ such that
$P(\xi_1=-1)=P(\xi_1=1)=1/2.$
We denote $(\Fs_{i})_{i\geq 0}$ the induced filtration, i.e. $\Fs_{i}:=\sigma(\xi_1,\dots,\xi_i)$, for $i\geq1$, and $\Fs_0:=\{\emptyset,\Omega\}$. 

For each $N>1$, the $N$-fractional binary market is the discrete market in which the bond and stock are traded at the 
times $\{0, \frac{1}{N},...,\frac{N-1}{N}, 1\}$ under the dynamics:
\begin{equation}\label{fbm}
B_n^{N}=1\quad\textrm{and}\quad S_n^{N}=\left(1+a_n^{N}+\frac{X_n}{N^H}\right)\, S_{n-1}^{N}.
\end{equation}
We assume that the value of $S^{N}$ at time $0$ is constant, i.e.~$S_0^{N}=s_0$. The drift $a^{N}$ approximates the continuous drift given in \eqref{fbse} via $a_n^{N}=\frac{1}{N}a(n/N)$ and the process $(X_n)_{n\geq 1}$ can be expressed as
\begin{equation}\label{scale}
X_n:=\sum\limits_{i=1}^{n-1}j_n(i)\,\xi_i+g_n\xi_n,
\end{equation}
where
$$j_n(i):=\sigma\, C_H \int\limits_{i-1}^{i}x^{\frac{1}{2}-H}\left(\int\limits_0^1 (v+n-1)^{H-\frac{1}{2}} (v+n-1-x)^{H-\frac{3}{2}}dv\right) dx,$$
and
$$g_n:=\sigma\, C_H\int\limits_{n-1}^{n}x^{\frac{1}{2}-H}(n-x)^{H-\frac{1}{2}}\left(\int\limits_0^1 (y(n-x)+x)^{H-\frac{1}{2}}y^{H-\frac{3}{2}}dy\right)dx.$$

From \eqref{scale}, we see that $X_n$ is the sum of a process depending only on the information until time $n-1$ and a process depending only on the present. More precisely, $X_n=\Ys_n+g_n\xi_n$, where
$$\Ys_n:=\sum_{i=1}^{n-1}j_n(i)\xi_i.$$ 
Therefore, given the history up to time $n-1$, which fixes the values of $\Ys_n$ and $S_{n-1}^{N}$, the price process can take only two possible values at the next step:
$$\left(1+a_n^{N}+\frac{\Ys_{n}-g_n}{N^H}\right)S_{n-1}^{N}\quad\textrm{or}\quad\left(1+a_n^{N}+\frac{\Ys_{n}+g_n}{N^H}\right)S_{n-1}^{N}.$$
This brings to light the binary structure of these markets.

\subsubsection{Some useful estimations}\label{est}
We recall some estimations obtained in or easily derived from \cite{CKP} for the quantities involved in the definition of the fractional binary markets, i.e., $a_n^{N}$, $j_n$ and $g_n$.
 \begin{lemma}\label{eji}
There exist constants $C_1, C_2>0$ such that for all $i\geq 2$ we have
\begin{itemize}
 \item  For $1\leq\ell\leq i/4$:
$$j_i(\ell)\leq \frac{C_1}{i^{2-2H}\,\ell^{H-\frac{1}{2}}}.$$
\item For all $1\leq k\leq 3i/4$:
$$j_i(i-k)\leq \frac{C_2}{k^{\frac{3}{2}-H}}.$$
\end{itemize}
\end{lemma}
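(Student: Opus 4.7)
Both bounds come from estimating the double integral defining $j_i(\ell)$ after splitting the domain into a ``far from the diagonal'' part, where the factors involving $i$ can be bounded by powers of $i$ alone, and a ``near the diagonal'' part where the integrable singularity $(v+i-1-x)^{H-3/2}$ has to be treated carefully. In both cases the fact that $H\in(1/2,1)$ gives $H-\tfrac12>0$ and $H-\tfrac32\in(-1,0)$, which will be used repeatedly.

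\textbf{First bound} ($1\le\ell\le i/4$). Here $x\in[\ell-1,\ell]\subseteq[0,i/4]$, so that $v+i-1-x\ge i/2$ for $i$ large enough. Since $H-\tfrac32<0$ and $H-\tfrac12>0$, the plan is to bound the inner integrand pointwise by
\begin{equation*}
(v+i-1)^{H-1/2}(v+i-1-x)^{H-3/2}\le i^{H-1/2}\,(i/2)^{H-3/2}\le C\,i^{2H-2}.
\end{equation*}
Then, pulling this factor out,
\begin{equation*}
j_i(\ell)\le C\,i^{2H-2}\int_{\ell-1}^{\ell}x^{1/2-H}\,dx\le C'\,i^{2H-2}\,\ell^{1/2-H},
\end{equation*}
where the last step uses that $x^{1/2-H}$ is decreasing and that $\ell-1\ge \ell/2$ for $\ell\ge 2$ (with the case $\ell=1$ absorbed in the constant). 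This gives the claimed estimate.

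\textbf{Second bound} ($1\le k\le 3i/4$, $\ell=i-k$). The idea is to exploit the cancellation between $x^{1/2-H}$ (now small, since $x\approx i$) and $(v+i-1)^{H-1/2}$ (of order $i^{H-1/2}$). After the change of variables $u=i-1-x$, the outer integral runs over $u\in[k-1,k]$ and
\begin{equation*}
j_i(i-k)=\sigma C_H\int_{k-1}^{k}(i-1-u)^{1/2-H}\int_0^1(v+i-1)^{H-1/2}(v+u)^{H-3/2}\,dv\,du.
\end{equation*}
Because $u\le k\le 3i/4$, one has $i-1-u\ge i/8$ for $i\ge 8$, so the two $i$-dependent factors combine to a bounded quantity. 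The plan is then to reduce to estimating
\begin{equation*}
\int_{k-1}^{k}\int_0^1(v+u)^{H-3/2}\,dv\,du=\frac{1}{H-1/2}\int_{k-1}^{k}\big[(u+1)^{H-1/2}-u^{H-1/2}\big]\,du.
\end{equation*}
For $k\ge 2$ the bracket is bounded by $(H-\tfrac12)u^{H-3/2}$ (by the mean value theorem applied to the concave function $x\mapsto x^{H-1/2}$), so the integral is $\le (k-1)^{H-3/2}\le C\,k^{H-3/2}$. For $k=1$ the integrable singularity at $u=0$ yields a bounded expression, which is of the claimed order since $k^{H-3/2}=1$.

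\textbf{Main difficulty.} The only delicate point is the singularity of the inner kernel as $v+i-1-x\to 0^+$, which occurs precisely when $\ell$ is close to $i$ (i.e.\ $k$ small). Handling this requires the change of variables above so that the singularity is isolated in the factor $(v+u)^{H-3/2}$, integrated over $v\in[0,1]$ before the outer variable $u$; the resulting bound $u^{H-3/2}$ is then integrable on $[k-1,k]$ for every $k\ge 1$. Everything else reduces to replacing large quantities by their leading order in $i$.
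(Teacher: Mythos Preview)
Your argument is correct. The paper itself does not give a self-contained proof of this lemma; it simply refers to the arguments of \cite[Proposition~5.5]{CKP} and \cite[Lemma~5.7]{CKP}, and your direct estimates---pulling out the $i$-dependent factors when $\ell\le i/4$, and performing the substitution $u=i-1-x$ followed by the mean-value bound on $(u+1)^{H-1/2}-u^{H-1/2}$ when $\ell=i-k$---are exactly the kind of computations those references contain. The only places to tighten are cosmetic: make explicit that for $i<4$ the range $1\le\ell\le i/4$ is empty (so nothing needs to be shown), and that for $i<8$ in the second estimate the finitely many values $j_i(i-k)$ are finite and can be absorbed into $C_2$.
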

\begin{proof}
The proof of the first inequality follows using similar arguments to those used in the proof of \cite[Proposition 5.5]{CKP}. The second inequality uses analogous estimations to those obtained in the proof of \cite[Lemma 5.7]{CKP}.
\end{proof}
Next result corresponds to \cite[Lemma 3.5 and Theorem 5.10]{CKP}. 
\begin{lemma}\label{eg1}\ For all $1< n\leq N$, we have
 $$g\,\leq g_n\leq  g\,\left(1+\frac{1}{n-1}\right)^{H-\frac{1}{2}}\leq g\,2^{H-\frac{1}{2}},$$
 where $g:=\frac{\sigma c_H}{H+\frac12}$.
 This implies that $\lim_{n\rightarrow \infty}g_n=g$. Moreover,
 \begin{equation}\label{cys}
\Ys_n\xrightarrow[n\rightarrow\infty]{(d)}\Ys:= 2g\sum_{k=1}^{\infty}\rho(k)\xi_k,
\end{equation}
where $\rho$ denotes the autocovariance function of a fractional Brownian motion of Hurst parameter $h=\frac{H}2+\frac14\in(\frac12,\frac34)$, i.e.
$\rho(n):=\frac{1}{2}[(n+1)^{2h}+(n-1)^{2h}- 2n^{2h}]>0.$
 \end{lemma}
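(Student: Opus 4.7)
\emph{Bounds on $g_n$.} I would first swap the order of integration in the definition of $g_n$ and substitute $t=n-x$ in the inner integral. A short computation shows that the factors $(n-t)^{1/2-H}$ and $(n-t)^{H-1/2}$ cancel, leaving
\begin{equation*}
g_n \;=\; \sigma\, C_H \int_0^1 y^{H-3/2}\int_0^1 t^{H-1/2}\left(1+\frac{t y}{n-t}\right)^{H-1/2}dt\,dy.
\end{equation*}
Since $H>1/2$, both $y^{H-3/2}$ and $t^{H-1/2}$ are integrable on $[0,1]$. The trivial bound $(1+ty/(n-t))^{H-1/2}\geq 1$ yields the lower inequality $g\leq g_n$ once the constant $g=\sigma c_H/(H+1/2)$ is identified with $\sigma C_H/[(H-1/2)(H+1/2)]$ (i.e.\ $c_H=C_H/(H-1/2)$), and the uniform estimate $ty/(n-t)\leq 1/(n-1)$ for $t,y\in[0,1]$, $n\geq 2$, produces the matching upper bound $g_n\leq g\,(1+1/(n-1))^{H-1/2}$. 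Squeezing then yields $g_n\to g$.

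\emph{Convergence of $\Ys_n$ in law.} The i.i.d.\ symmetry of $(\xi_i)$ shows that $\Ys_n$ has the same distribution as $\tilde{\Ys}_n := \sum_{k=1}^{n-1} j_n(n-k)\,\xi_k$. Set $a^{(n)}_k := j_n(n-k)$ for $k<n$, $a^{(n)}_k := 0$ otherwise, and $a_k := 2g\,\rho(k)$. By the $L^2$-isometry for Rademacher series,
\begin{equation*}
\Big\|\sum_{k\geq 1}(a^{(n)}_k-a_k)\xi_k\Big\|_{L^2}^2 \;=\; \sum_{k\geq 1}(a^{(n)}_k-a_k)^2,
\end{equation*}
so it suffices to prove $a^{(n)}\to a$ in $\ell^2(\N)$; the $L^2$-convergence $\tilde{\Ys}_n\to\Ys$, and hence the convergence in law, is then automatic. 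This in turn reduces to (i) pointwise convergence $j_n(n-k)\to 2g\,\rho(k)$ for each fixed $k\geq 1$, and (ii) a uniform $\ell^2$ tail bound. For (ii), the second estimate of Lemma \ref{eji} gives $j_n(n-k)\leq C_2/k^{3/2-H}$, so the tails are dominated by the convergent series $\sum_k k^{-(3-2H)}$ independently of $n$; combined with the comparable decay $\rho(k)=O(k^{H-3/2})$ of the limit coefficients, a standard Cauchy argument promotes pointwise to $\ell^2$ convergence.

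\emph{Main obstacle.} Step (i) above is the delicate point. To extract the limit of $j_n(n-k)$, I would substitute $x=n-k+s$ with $s\in[0,1]$ in the outer integral of the definition of $j_n$, so that $x^{1/2-H}\sim n^{1/2-H}$ and $(v+n-1)^{H-1/2}\sim n^{H-1/2}$ for large $n$; these two scales cancel, and the surviving integrand in $s$ and $v$ involves only differences of shifted powers that integrate explicitly. Recognising the resulting expression as a discrete second difference of $x\mapsto x^{H+1/2}=x^{2h}$ evaluated at $k$, with $h=H/2+1/4$, and invoking the definition of $\rho$, one identifies the limit with $2g\,\rho(k)$. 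Making these asymptotic replacements quantitative, so that dominated convergence applies uniformly in $k\leq K$ for arbitrary $K$, is the main technical step of the proof.
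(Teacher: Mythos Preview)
The paper does not actually prove this lemma: the sentence immediately preceding it reads ``Next result corresponds to \cite[Lemma 3.5 and Theorem 5.10]{CKP}'', and no argument is given. Your proposal therefore supplies considerably more than the paper does, and its overall strategy is sound.

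Your treatment of the $g_n$ bounds is clean and correct; the change of variables $t=n-x$ produces exactly the representation you state, and the identification $c_H=C_H/(H-\tfrac12)$ matches the constant $g$ in the statement. For the convergence of $\Ys_n$, the distributional relabelling $\Ys_n\stackrel{d}{=}\tilde\Ys_n$ via the exchangeability of $(\xi_i)$, followed by $L^2$-convergence of Rademacher coefficients, is the natural route, and the pointwise limit $j_n(n-k)\to 2g\rho(k)$ is precisely the computation behind \cite[Lemma 5.7]{CKP}, which the present paper invokes elsewhere (see the proof of Theorem~\ref{conv}(4)). Two small points deserve tightening. First, the tail domination you quote from Lemma~\ref{eji} only covers $1\le k\le 3n/4$; for $3n/4<k<n$ one has $\ell:=n-k\le n/4$, and the \emph{first} estimate of Lemma~\ref{eji} yields $\sum_{\ell\le n/4} j_n(\ell)^2=O(n^{2H-2})\to 0$, which closes the $\ell^2$ argument. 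Second, your substitution $x=n-k+s$ is off by one (the outer integral defining $j_n(n-k)$ runs over $[n-k-1,\,n-k]$); after this correction the cancellation of the $n^{\pm(H-1/2)}$ factors and the identification of the limit with the second difference of $x\mapsto x^{H+1/2}=x^{2h}$, i.e.\ with $2g\rho(k)$, go through exactly as you describe.
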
 
 
For the drift term $a_n^{N}$, one can see using the definition and the continuity of the function $a$ that:
\begin{equation}\label{ea}
 |a_n^{(N)}|\leq \frac{{||a||}_\infty}{N},\qquad n\in\{1,...,N\}.
\end{equation}
This indicates that the contribution of $a_n^{N}$ is significantly less than the contribution of the other parameters of the model. Since we are interested in asymptotic properties of the fractional binary markets, the problem can be simplified by studying the case without the drift. Therefore, we assume henceforth that $a_n^{N}=0$ for all $1\leq n\leq N$.

\subsection{\texorpdfstring{Strong asymptotic arbitrage under transaction costs}{}}
It is well known that the fractional Black-Scholes model without friction is not free of arbitrage. This fact is also reflected in the approximating sequence of fractional binary markets, since, as shown by Sottinen in \cite{Sotti}, the $N$-fractional binary market admits, for $N$ sufficiently large, arbitrage opportunities. However, a pathological situation occurs when one introduces transaction costs. On the one hand, the fractional Black-Scholes model is free of arbitrage under arbitrarily small transaction costs. On the other hand, one can choose transaction costs $\lambda_N$ converging to $1$ such that the $N$-fractional binary market, for $N$ large enough, admits arbitrage under transaction costs $\lambda_N$ (see \cite{CP}). Despite this, the corresponding arbitrage opportunities disappear in the limit, in the sense that, the explicit strategies behind this counterintuitive behaviour provide strictly positive profits with probabilities vanishing in the limit. In order to avoid this kind of situations, we look here to the whole sequence of fractional markets as a large financial market, the large fractional binary market, and we study its asymptotic arbitrage opportunities, as introduced by Kabanov and Kramkov in \cite{Kakra94} and \cite{Kab:Kra:1998}. 

\begin{definition}[Large fractional binary market] The sequence of markets given by
 $\{(\Omega,\Fs,(\Fs_n)_{n=0}^N, P,S^{N})\}_{N\geq 1}$, where $S^{N}$ is the price process defined in \eqref{fbm}, is called large fractional binary market.
\end{definition}
We assume that the $N$-fractional binary market is subject to $\lambda_N\geq 0$ transaction costs ($\lambda_N=0$ corresponds to the frictionless case). We assume, without loss of generality, that we pay $\lambda_N$ transaction costs only when we sell and not when we buy. This means that the bid and ask price of the stock $S^{N}$ are modelled by the processes ${((1-\lambda)S^{N}_n)}_{n=0}^N$ and ${(S^{N}_n)}_{n=0}^N$ respectively.

\begin{definition}[$\lambda_N$-self-financing strategy]\label{sfs}
Given $\lambda_N\in[0,1]$, a $\lambda_N$-self-financing strategy for the process $S^{N}$ is an adapted process $\phi^N={(\phi_n^{0,N},\phi_n^{1,N})}_{n=-1}^N$ satisfying, for all $n\in\{0,...,N\}$, the following condition:
  \begin{equation}\label{selffin}
  \phi_n^{0,N}-\phi_{n-1}^{0,N}\leq -{(\phi_n^{1,N}-\phi_{n-1}^{1,N} )}^+\,S_n^{N}\, +\, (1-\lambda_N)\,{(\phi_n^{1,N}-\phi_{n-1}^{1,N} )}^-\,S_n^{N}.
 \end{equation}
Here $\phi^{0,N}$ denotes the number of units we hold in the bond and $\phi^{1,N}$ denotes the number of units in the stock. For such a $\lambda_N$-self-financing strategy, the liquidated value of the portfolio at each time $n$ is given by
$$V_n^{\la_N}(\phi^N):=\phi_n^{0,N}+(1-\la_N)(\phi_n^{1,N})^+S_n^{N}-(\phi_n^{1,N})^-S^{N}_n.$$ 
If $\la_N=0$, we simply write $V_n(\phi^N)$ instead of $V_n^0(\phi^N)$.
\end{definition}

\begin{remark}\label{csfs}
 For the purposes of this work, we can restrict our attention to self-financing strategies satisfying \eqref{selffin} with equality and having also that $\phi_N^{1,N}=0$. In other words, we avoid throwing away money and, at maturity, we liquidate the position in stock. For these kind of self-financing strategies, the values of $\phi_n^{0,N}$, $n\in\{0,...,N\}$, can be expressed in terms of the values of $\lambda_N$, $\phi_{-1}^{0,N}$ and $(\phi_k^{1,N})_{k=-1}^n$ as follows:
\begin{equation}\label{selffineq}
 \phi_n^{0,N}=  \phi_{-1}^{0,N}-\sum_{k=0}^n{(\De_k\phi^{1,N})}^+\,S^{N}_k\, +\, (1-\lambda_N)\,\sum_{k=0}^n{(\De_k\phi^{1,N})}^-\,S^{N}_k.
\end{equation}
In the previous identity, we use the notation $\De_n h:=h_n-h_{n-1}$. 

Equation \eqref{selffineq} gives us a way to construct self-financing strategies. More precisely, given $\lambda_N\geq 0$, a constant $\phi_{-1}^{0,N}$ and an adapted process $(\phi_k^{1,N})_{k=-1}^N$, we can use \eqref{selffineq} to define $(\phi_k^{0,N})_{k=0}^N$. The resulting adapted process ${(\phi_n^{0,N},\phi_n^{1,N})}_{n=-1}^N$ is by construction a $\lambda_N$-self-financing strategy, satisfying \eqref{selffin} with equality.
\end{remark}

In their work, Kabanov and Kramkov \cite{Kab:Kra:1998} distinguished between two kinds of asymptotic arbitrage, of the first kind and of the second kind. An asymptotic arbitrage of the first kind gives the possibility of getting arbitrarily rich with strictly positive probability by taking an arbitrarily small risk, whereas the second one is an opportunity of getting a strictly positive profit with probability arbitrarily close to $1$ by taking the risk of losing a uniformly bounded amount of money. The authors also considered a stronger version called ``strong asymptotic arbitrage'', which inherits the strong properties of the two mentioned kinds. More precisely, it can be seen as the possibility of getting arbitrarily rich with probability arbitrarily close to $1$ while taking a vanishing risk. We will work from now on with the latter concept.

We introduce now the definition of strong asymptotic arbitrage. For a detailed presentation on this topic, we refer the reader to \cite{Kab:Kra:1998} for frictionless markets and to \cite{Kl:Le:Pe} for markets with transaction costs.

\begin{definition}\label{SAAd}
There exists a strong asymptotic arbitrage (SAA) with transaction costs $\{\la_N\}_{N\geq 1}$ if there exists a subsequence of markets (again denoted by $N$) and self-financing trading strategies $\phi^N=(\phi^{0,N}, \phi^{1,N})$ with zero endowment for $S^{N}$ such that
\begin{enumerate}
\item{($c_N$-admissibility condition)} For all $i=0,\ldots,N$, $$V^{\la_N}_i(\phi^N)\geq-c_N,$$
\item $\lim_{N\to\infty}P^N(V_{N}^{\la_N}(\phi^N)\geq C_N)=1$
\end{enumerate}
where $c_N$ and $C_N$ are sequences of positive real numbers with $c_N\to0$ and $C_N\to\infty$.
\end{definition}
\begin{remark}
For self-financing strategies with zero endowment, and satisfying \eqref{selffin} with equality, the value process takes the following form:
\begin{align}\label{value}
V_n^{\lambda_N}(\phi^N)&=V_0^{\lambda_N}(\phi^N)+\sum\limits_{k=1}^{n}\phi_{k-1}^{1,N}\Delta_k S^{N}-\lambda_N\sum\limits_{k=1}^{n}\ind_{\{\Delta_k\phi^{1,N}\geq 0\}}\Delta_k\left[{(\phi^{1,N})}^+ S^{N}\right]\nonumber\\
&\qquad-\lambda_N\sum\limits_{k=1}^{n}\ind_{\{\Delta_k\phi^{1,N}< 0\}}\left\{\phi_{k-1}^{1,N}\Delta_k S^{N}+\Delta_k\left[{(\phi^{1,N})}^- S^{N}\right]\right\},
\end{align}
where
\begin{equation}\label{V0}
V^{\lambda_N}_0(\phi^N)=-\lambda_N|\phi^{1,N}_0|s_0.
\end{equation}
\end{remark}

\section{Strong asymptotic arbitrage without transaction costs}\label{S3}
As pointed out in the introduction, the large fractional binary market does not fulfil the standard conditions used in the theory of asymptotic arbitrage for large financial markets. For this reason, we use here a constructive approach to study the existence of strong asymptotic arbitrage. 
In this section, we consider the frictionless case. Our first goal is to construct self-financing strategies providing strictly positive profits at maturity with probability converging to one. To do so, we choose, for each $N\geq 1$, a trading strategy $\phi^N:=(\phi^{0,N},\phi^{1,N})$ similar to the one provided in \cite{BeSoVa} for the continuous framework. We have seen in Remark \ref{csfs} that, it is enough to indicate the position in stock $\phi^{1,N}$, as the position in bond $\phi^{0,N}$ can be derived from \eqref{selffineq}, setting $\lambda_N=0$ and $\phi_{-1}^{0,N}:=0$. Moreover, $\phi^{1,N}$ is chosen to be given by
$$\phi_{-1}^{1,N}:=\phi_0^{1,N}:=0\ \ \text{and}\ \ \phi_k^{1,N}:=N^{H-1}\frac{X_k}{S_k^{N}},\quad k\in\{1,\ldots,N\}.$$
Using \eqref{value} and \eqref{V0} with $\lambda_N=0$, we deduce that the value process associated to $\phi^N$ is given by
$$V_n(\phi^N)=\frac1{N}\sum_{k=1}^nX_{k-1}X_k,\quad n\in\{0,...,N\}.$$
Note that the terms in the sum can be expressed as
\begin{equation}\label{theta}
  X_{k-1}X_k=\theta_k^{(1)}+\theta_k^{(2)}+\theta_k^{(3)}+\theta_k^{(4)},
\end{equation}
where 
\begin{equation*}
\theta_k^{(1)}:=g_{k-1}g_k\xi_{k-1}\xi_k,\quad
\theta_k^{(2)}:=g_{k}\xi_k\Ys_{k-1},\quad
\theta_k^{(3)}:=g_{k-1}\xi_{k-1}\Ys_k,\quad
\theta_k^{(4)}:=\Ys_{k-1}\Ys_k.
\end{equation*}
Defining, for $i=1,2,3,4$,
$$\Ss_n^{(i)}=\sum_{k=1}^n\theta_{k}^{(i)},\quad n=1,\ldots,N,$$
we see that
\begin{equation}\label{VN}
V_n(\phi^{N})=\frac{1}{N}\left(\Ss_n^{(1)}+\Ss_n^{(2)}+\Ss_n^{(3)}+\Ss_n^{(4)}\right).
\end{equation}
\subsection{The value process at maturity and the law of large numbers}\label{s31}
We aim to characterize the asymptotic behaviour of $V_N(\phi^{N})$ by studying the asymptotic properties of each term entering \eqref{VN}. We will see that the first term in \eqref{VN} is a sum of pairwise independent random variables and hence, an appropriate extension of the law of large numbers to this situation can be applied (see \cite{CTT}). The asymptotic behaviour of the second and third terms in \eqref{VN} will be deduced by studying their variances. Finally, for the last term, we use a different approach, since the random variables $(\theta_k^{(4)})_{k\geq 1}$ are pairwise correlated. The notion of mixingale and a law of large numbers for uniformly integrable $L^1$-mixingales will play a key role in studying the behaviour of this term. For the ease of the reader, we recall now the notion of mixingale.
\begin{definition}[$L^p$-Mixingale]\label{mix}
 A sequence $\{X_k\}_{k\geq1}$ of random variables is an $L^p$-mixingale with respect to a given filtration $(\Fs_k)_{k\in\Zb}$, if there exist nonnegative constants $\{c_k\}_{k\geq1}$ and $\{\psi_m\}_{m\geq0}$ such that $\psi_m\to0$ as $m\to\infty$ and for all $k\geq1$ and $m\geq0$ the following hold:
 \begin{itemize}
  \item [(a)] ${\parallel E(X_k|\cF_{k-m}))\parallel}_p\leq c_k\psi_m$
  \item [(b)] ${\parallel X_k-E(X_k|\cF_{k+m}))\parallel}_p\leq c_k\psi_{m+1}$.
 \end{itemize}
\end{definition}

In order to study the last term in \eqref{VN}, we define:
 $$\Ys_k^*:=\Ys_{k-1}\Ys_k-E[\Ys_{k-1}\Ys_k],$$ and we consider the filtration $\mathbb{F}^*:=(\Fs_{i}^*)_{i\in\Zb}$ given by $\Fs_i^*:=\Fs_{i-1}$, for $i\geq 2$, and $\Fs_i^*:=\{\emptyset,\Omega\}$, for $i\leq 1$.
\begin{proposition}\label{mixi}
The process $(\Ys_k^*:k\geq 1)$ is an $L^2$-bounded $L^2$-mixingale with respect to $\mathbb{F}^*$. 
\end{proposition}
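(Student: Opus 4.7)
The starting point is that $\Ys_k=\sum_{i=1}^{k-1}j_k(i)\xi_i$ is $\Fs_{k-1}$-measurable, and hence so is $\Ys_k^*$, which makes it $\Fs_k^*$-measurable. Since $(\Fs_i^*)_{i\in\Zb}$ is increasing, condition (b) of Definition \ref{mix} holds automatically with zero on the left-hand side for every $m\geq 0$. The $L^2$-boundedness of $(\Ys_k^*)$ will follow from $\|\Ys_k^*\|_2\leq 2\|\Ys_{k-1}\|_4\|\Ys_k\|_4$, a fourth-moment identity (using $\xi_i^2=1$) giving $\|\Ys_k\|_4\leq 3^{1/4}\|\Ys_k\|_2$, and a uniform bound on $\|\Ys_k\|_2^2=\sum_{i=1}^{k-1}j_k(i)^2$ obtained by splitting the sum at index $k/4$ and applying the two estimates of Lemma \ref{eji} (the first gives a contribution of order $k^{-(2-2H)}$, the second an absolutely convergent tail).

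For condition (a) it suffices to treat $m\geq 1$ with $k-m\geq 2$, since otherwise $\Fs_{k-m}^*$ is trivial and $E[\Ys_k^*]=0$. Using $\Fs_{k-m}^*=\Fs_{k-m-1}$, I would decompose $\Ys_{k-1}=A_1+B_1$ and $\Ys_k=A_2+B_2$ with
\[A_1:=\sum_{i=1}^{k-m-1}j_{k-1}(i)\xi_i,\qquad A_2:=\sum_{i=1}^{k-m-1}j_k(i)\xi_i\]
being $\Fs_{k-m-1}$-measurable, and $B_1,B_2$ centred and independent of $\Fs_{k-m-1}$. Expanding $(A_1+B_1)(A_2+B_2)$ and taking the conditional expectation yields $E[\Ys_k^*|\Fs_{k-m-1}]=A_1A_2-E[A_1A_2]$. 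Exploiting once more that $\xi_i^2=1$, a direct expansion over pairs of distinct indices gives $\|A_1A_2-E[A_1A_2]\|_2^2\leq 2\|A_1\|_2^2\|A_2\|_2^2$.

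The task therefore reduces to a uniform bound of the form $\|A_1\|_2^2,\|A_2\|_2^2\leq C(1+m)^{-(2-2H)}$. To achieve this I split the sum $\sum_{i=1}^{k-m-1}j_{k-1}(i)^2$ at $i=(k-1)/4$. On $\{i\leq(k-1)/4\}$ the first inequality of Lemma \ref{eji} provides $j_{k-1}(i)^2\leq C_1^2/((k-1)^{4-4H}i^{2H-1})$ and, since $\sum_{i=1}^M i^{-(2H-1)}=O(M^{2-2H})$ for $H\in(1/2,1)$, this contributes at most $O((k-1)^{-(2-2H)})=O((1+m)^{-(2-2H)})$ because $k-1\geq m$. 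On $\{i>(k-1)/4\}$ the second inequality gives $j_{k-1}(i)^2\leq C_2^2/(k-1-i)^{3-2H}$; after the change of variable $q=k-1-i\geq m$ and using $3-2H>1$, the tail $\sum_{q\geq m}q^{-(3-2H)}$ is again $O((1+m)^{-(2-2H)})$. The analogous bound holds for $\|A_2\|_2^2$. Choosing $c_k$ constant and $\psi_m:=(1+m)^{-(2-2H)}\to 0$ then establishes the mixingale property. The principal difficulty is the bookkeeping that reconciles the two regimes of Lemma \ref{eji} and confirms that both the head and the tail of the sum decay uniformly in $m$ at the stated rate.
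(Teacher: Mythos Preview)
Your proof is correct and follows essentially the same route as the paper: both decompose $\Ys_{k-1}$ and $\Ys_k$ into $\Fs_{k-m}^*$-measurable and independent parts, identify $E[\Ys_k^*\mid\Fs_{k-m}^*]$ as $A_1A_2-E[A_1A_2]$ (the paper's $P_{k-m}^*$), bound its $L^2$-norm by $\sqrt{2}\,\|A_1\|_2\|A_2\|_2$, and then control $\sum_{i\leq k-m-1}j_k(i)^2$ and $\sum_{i\leq k-m-1}j_{k-1}(i)^2$ via the two regimes of Lemma~\ref{eji}, yielding the rate $m^{-(2-2H)}$.

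One minor slip worth fixing: your sentence ``otherwise $\Fs_{k-m}^*$ is trivial'' does not cover $m=0$, since for $k\geq 2$ one has $\Fs_k^*=\Fs_{k-1}$, which is not trivial, and then $E[\Ys_k^*\mid\Fs_k^*]=\Ys_k^*$. This case is of course handled by the $L^2$-boundedness you already established together with your choice $\psi_0=1$, exactly as the paper does; just say so explicitly.
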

  \begin{proof}
  We first prove that the process $(\Ys_k^*:k\geq 1)$ is $L^2$-bounded. Note that
  $$E\left[|\Ys_{k-1}\Ys_k|^2\right]\leq E\left[|\Ys_{k-1}|^4\right]+E\left[|\Ys_k|^4\right],$$
  and using Khintchine's inequality (see \cite[(1)]{Khin}) for both terms on the right side, we obtain that
  $$E\left[|\Ys_{k-1}\Ys_k|^2\right]\leq 3\left(E\left[|\Ys_{k-1}|^2\right]^2+E\left[|\Ys_k|^2\right]^2\right).$$
  From \cite[Lemma 5.1]{CKP}, we conclude that $E[|\Ys_{k-1}\Ys_k|^2]$ is uniformly bounded, and therefore, $(\Ys_k^*:k\geq 1)$ is $L^2$-bounded.
  
  Now, we proceed to show that $\Ys_k^*$ is an $L^2$-mixingale with respect to $\mathbb{F}^*$, i.e. we have to check the two conditions of Definition~\ref{mix}. Note that, since $\Ys_k^*$ is $\Fs_k^*$-measurable, condition (b) is automatically satisfied. Hence, it remains to prove condition (a) of Definition \ref{mix}, i.e.
  \begin{equation}\label{mpa}
   {\parallel E[\Ys_k^*|\Fs_{k-m}^*]\parallel}_2\leq c_k\psi_m,\quad k\geq 1, m\geq 0,
  \end{equation}
  for some nonnegative constants $c_k$ and $\psi_m$ such that $\psi_m\to0$ as $m\to\infty$.
  
  Note that, for $k\leq m+1$, the left-hand side of \eqref{mpa} is equal to zero, and then, \eqref{mpa} holds for any choice of $c_k$ and $\psi_m$. The case $m=0$ can be easily treated using that $(\Ys_k^*:k\geq 1)$ is $L^2$-bounded. Now, we assume that $k-1>m\geq 1$, and we write $\Ys_{k-1}\Ys_k$ as follows:
  \begin{align}\label{y}
  \Ys_k\Ys_{k-1}&=\left(\sum_{l=1}^{k-1}j_k(l)\xi_l\right)\left(\sum_{l=1}^{k-2}j_{k-1}(l)\xi_l\right)\nonumber\\
  &=\left(\underbrace{\sum_{l=1}^{k-m-1}j_k(l)\xi_l}_{P_{k-m}^{(1)}}+\underbrace{\sum_{l=k-m}^{k-1}j_k(l)\xi_l}_{F_{k-m}^{(1)}}\right)\left(\underbrace{\sum_{l=1}^{k-m-1}j_{k-1}(l)\xi_l}_{P_{k-m}^{(2)}}+\underbrace{\sum_{l=k-m}^{k-2}j_{k-1}(l)\xi_l}_{F_{k-m}^{(2)}}\right)\nonumber\\
  &=P_{k-m}^{(1)}P_{k-m}^{(2)}+P_{k-m}^{(2)}F_{k-m}^{(1)}+P_{k-m}^{(1)}F_{k-m}^{(2)}+F_{k-m}^{(1)}F_{k-m}^{(2)},
  \end{align}
 Using that $P_{k-m}^{(i)}$ is independent of $F_{k-m}^{(j)}$, that $P_{k-m}^{(i)}$ is measurable with respect to $\Fs_{k-m}^*$ and that $F_{k-m}^{(i)}$ is independent of $\Fs_{k-m}^*$ for all $i,j\in\{1,2\}$, we deduce from \eqref{y} that
  \begin{equation}\label{exp}
  E[\Ys_k\Ys_{k-1}]=E\left[P_{k-m}^{(1)}P_{k-m}^{(2)}\right]+E\left[F_{k-m}^{(1)}F_{k-m}^{(2)}\right]
  \end{equation}
  and 
  \begin{equation}\label{condexp}
  E\left[\Ys_k\Ys_{k-1}|\Fs_{k-m}^*\right]=P_{k-m}^{(1)}P_{k-m}^{(2)}+E\left[F_{k-m}^{(1)}F_{k-m}^{(2)}\right].
  \end{equation}
  From \eqref{exp} and \eqref{condexp}, we have that
  \begin{align}\label{condexp2}
  E\left[\Ys_k^*|\Fs_{k-m}^*\right]=P_{k-m}^{(1)}P_{k-m}^{(2)}-E\left[P_{k-m}^{(1)}P_{k-m}^{(2)}\right].
  \end{align}
  Now, we write
  $$P_{k-m}^{(1)}P_{k-m}^{(2)}=\sum_{l\neq p}^{k-m-1}j_k(l)j_{k-1}(p)\xi_l\xi_p+\sum_{l=1}^{k-m-1}j_k(l)j_{k-1}(l),$$  
  which implies, using the independence of $\xi_l$ and $\xi_p$ for $l\neq p$, that
  $$E\left[P_{k-m}^{(1)}P_{k-m}^{(2)}\right]=\sum_{l=1}^{k-m-1}j_k(l)j_{k-1}(l),$$ 
  and hence
  $$E\left[\Ys_k^*|\Fs_{k-m}^*\right]=P_{k-m}^{(1)}P_{k-m}^{(2)}-E\left[P_{k-m}^{(1)}P_{k-m}^{(2)}\right]=\sum_{l\neq p}^{k-m-1}j_k(l)j_{k-1}(p)\xi_l\xi_p=:P_{k-m}^*.$$
Note first that
  \begin{equation}\label{pvar}
    E\left[|P_{k-m}^*|^2\right]\leq 2\sum_{l\neq p}^{k-m-1}(j_k(l)\,j_{k-1}(p))^2\leq 2\sum_{l=1}^{k-m-1}(j_k(l))^2\sum_{l=1}^{k-m-1}(j_{k-1}(l))^2.
  \end{equation}
  Additionally, using Lemma~\ref{eji}, we see that
    \begin{align*}
  \sum_{l=1}^{k-m-1}(j_k(l))^2&\leq\sum_{l=1}^{\frac{k}4}(j_k(l))^2+\sum_{l=m+1}^{\frac{3k}4-1}(j_k(k-l))^2\leq\sum_{l=1}^{\frac{k}4}(j_k(l))^2+\sum_{l=1}^{\frac{3k}4}(j_k(k-l))^2\\
&\leq\frac{C_1}{k^{4-4H}}\sum_{l=1}^{\frac{k}4}\frac1{l^{2H-1}}+C_2\sum_{l=1}^{\frac{3k}4}\frac1{l^{3-2H}}\leq \frac{C^0}{k^{2-2H}}\leq \frac{C^0}{m^{2-2H}},
  \end{align*}
  where $C^0>0$ is a well chosen constant. In the previous estimation, the term $\sum_{l=m+1}^{\frac{3k}4-1}(j_k(k-l))^2$ has to be understood as equal to zero if $k-m-1\leq k/4$.
  A similar argument shows that, there is a constant $C^*>0$ such that
  \begin{equation*}
   \sum_{l=1}^{k-m-1}(j_{k-1}(l))^2\leq \frac{C^*}{{m}^{2-2H}}.
  \end{equation*}
  Consequently, Equation \eqref{pvar} leads to
  \begin{equation*}
   E[|P_{k-m}^*|^2]\leq  \frac{C}{m^{4-4H}},
   \end{equation*}
   where $C>0$ is an appropriate constant. We therefore obtain that, for an appropriate constant $c>0$, the following holds:
 \begin{equation}
 \sqrt{ E[|E[\Ys_k^*|\Fs_{k-m}^*]|^2]}=\sqrt{E[|P_{k-m}^*|^2]}\leq c\,\frac1{m^{2-2H}}.
  \end{equation}
  The result follows by choosing $c_k:=c$ and $\psi_m:=m^{2H-2}$. 
  \end{proof}
  
\begin{remark}
 We have proved that $(\Ys_k^*,k\geq 1)$ is an $L^2$-bounded $L^2$-mixingale. In particular, $(\Ys_k^*,k\geq 1)$ is an uniformly integrable $L^1$-mixingale (we can use the same $c_\ell$ and $\psi_m$). Since in addition, $\sum_{k=1}^n c_k/n=c<\infty$, the conditions of the law of large numbers for mixingales given in \cite[Theorem 1]{An} are satisfied.
\end{remark}

The next result gives the asymptotic behaviour of  $V_N(\phi^{N})$ by studying the convergence properties of each term appearing in \eqref{VN}.

\begin{theorem}[Law of large numbers]\label{conv}
 The following statements hold:
 \begin{enumerate}
 \item $\frac1{N}\,\Ss_N^{(1)}\xrightarrow[N\to\infty]{a.s.}0$,\\
  \item $\frac1{N}\,\Ss_N^{(2)}\xrightarrow[N\to\infty]{L^2(P)}0$,\\
  \item $\frac1{N}\,\Ss_N^{(3)}\xrightarrow[N\to\infty]{L^2(P)}g^2(2^{H+\frac12}-2)>0$,
    \item $\frac1{N}\,\Ss_N^{(4)}\xrightarrow[N\to\infty]{L^1(P)}4g^2\sum\limits_{k=2}^\infty\rho(k)\rho(k-1)>0$.
 \end{enumerate} 
   In particular
  $$V_N(\phi^{N})\xrightarrow[N\to\infty]{P}\vartheta>0,$$
  where $\vartheta:=4g^2\sum_{k=2}^\infty\rho(k)\rho(k-1)+g^2(2^{H+\frac12}-2)$.
\end{theorem}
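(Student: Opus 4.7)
The plan is to analyse each of the four sums $\Ss_N^{(i)}$ in \eqref{VN} separately and then assemble the pieces by Slutsky. For (1) I would use that $(\xi_{k-1}\xi_k)_{k\ge 2}$ is a \emph{pairwise independent} sequence: the only non-trivial pair is consecutive ones, for which conditioning on the shared $\xi_j$ gives $P(\xi_{j-1}\xi_j=\pm 1,\,\xi_j\xi_{j+1}=\pm 1)=\tfrac14$. Since $g_n$ is uniformly bounded by Lemma~\ref{eg1}, the summands $\theta_k^{(1)}$ are bounded, centred and pairwise independent, so the strong law of large numbers of~\cite{CTT} applies and delivers (1).

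For (2) and (3) I would argue via first and second moments. When $k\ne j$, a short computation shows that the cross term $E[\theta_k^{(i)}\theta_j^{(i)}]$ (after centring in the case of (3)) features the largest $\xi$-index with no partner and therefore vanishes by the mean-zero independence of the $(\xi_n)$. Hence the centred variance reduces to a diagonal sum, which is $O(N)$ by Lemma~\ref{eg1} and the $L^2$-bound on $\Ys_k$ from~\cite[Lemma~5.1]{CKP}, yielding $L^2$-convergence of the centred sum to zero. The mean itself vanishes for (2), settling it. For (3), $E[\theta_k^{(3)}]=g_{k-1}j_k(k-1)$, so only the Ces\`{a}ro limit of these means has to be identified. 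From the analysis in~\cite{CKP} underlying \eqref{cys} one reads off the pointwise asymptotics $j_n(n-m)\to 2g\rho(m)$ for each fixed $m\ge 1$; taking $m=1$ and combining with $g_{k-1}\to g$, Ces\`{a}ro's lemma and $\rho(1)=\tfrac12(2^{2h}-2)=\tfrac12(2^{H+1/2}-2)$ gives the announced limit $g^2(2^{H+1/2}-2)$.

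For (4) I would invoke Proposition~\ref{mixi}: $(\Ys_k^*)$ is an $L^2$-bounded $L^2$-mixingale with constants $c_k\equiv c$, hence a uniformly integrable $L^1$-mixingale satisfying $\tfrac1N\sum_k c_k=c<\infty$, so Andrews' LLN~\cite[Theorem~1]{An} yields $\tfrac1N\sum_{k=1}^N\Ys_k^*\to 0$ in $L^1$. It then remains to compute the Ces\`{a}ro limit of $E[\Ys_{k-1}\Ys_k]=\sum_{m=1}^{k-2}j_{k-1}(k-1-m)j_k(k-1-m)$. Using again $j_n(n-m)\to 2g\rho(m)$ together with a summable dominant coming from Lemma~\ref{eji}, the $m$-th term converges to $4g^2\rho(m)\rho(m+1)$; dominated convergence and Ces\`{a}ro then deliver $4g^2\sum_{k\ge 2}\rho(k)\rho(k-1)$. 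Combining (1)--(4) via \eqref{VN} and Slutsky produces $V_N(\phi^N)\to\vartheta$ in probability, with $\vartheta>0$ since both surviving summands are positive. The principal obstacle is the pointwise identification $j_n(n-m)\to 2g\rho(m)$ together with the domination step needed in (4): although morally evident from \eqref{cys}, these require a careful asymptotic analysis of the defining integrals of $j_n(\cdot)$ in the spirit of~\cite{CKP}.
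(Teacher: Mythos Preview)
Your proposal is correct and follows essentially the same route as the paper: pairwise independence plus \cite{CTT} for (1), orthogonality of the centred summands and a diagonal variance bound for (2) and (3), and Proposition~\ref{mixi} combined with Andrews' $L^1$-mixingale LLN for (4). Your splitting in (3) into mean plus centred part is exactly the paper's decomposition $\theta_k^{(3)}=g_{k-1}j_k(k-1)+g_{k-1}\xi_{k-1}\tilde{\Ys}_{k-1}$, and the two ``obstacles'' you flag, the pointwise limit $j_n(n-m)\to 2g\rho(m)$ and the domination in (4), are precisely what the paper imports from \cite[Eq.~5.3 and Lemma~5.7]{CKP} together with Lemma~\ref{eji}.
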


  \begin{proof}[Proof of Theorem \ref{conv}]
  
   (1) Note first that, for all $j\neq k$, we have
  $$P(\xi_k\xi_{k-1}=x|\xi_j\xi_{j-1}=y)=\frac12,$$
  independently of $y$, where $x,y\in\{-1,1\}$. Therefore, the random variables $(\theta_k^{(1)})_{k\geq 1}$ are pairwise independent. In addition, since $\Var[\xi_k\xi_{k-1}]=1$, we deduce from the estimates given in Lemma~\ref{eg1} that $\sum_{k=1}^N\frac1{k^2}\Var[\theta_k^{(1)}]<\infty$. Hence, the result follows as an application of the law of large numbers for pairwise independent random variables (see \cite[Theorem 1]{CTT}).
  
    (2) Note that $\xi_k$ is independent of $\Ys_{k-1}$, and in particular $E[\xi_k\Ys_{k-1}]=0$. Consequently, the convergence in $L^2(P)$ of $\Ss^{(2)}_N/N$ to $0$ is equivalent to the convergence of the variance to $0$. In addition, for any $k<j$, one can also see that 
  $$E[\xi_k\Ys_{k-1}\xi_j\Ys_{j-1}]=E[\xi_k\Ys_{k-1}\Ys_{j-1}]E[\xi_j]=0.$$
  It follows that
    \begin{align*}
  \Var\left[\frac1{N}\Ss^{(2)}_N\right]&=\frac1{N^2}\sum_{k=1}^N{g_k}^2\Var[\xi_k\Ys_{k-1}]=\frac1{N^2}\sum_{k=1}^N{g_k}^2E[\xi_k^2(\Ys_{k-1})^2]\\
  &=\frac1{N^2}\sum_{k=1}^N {g_k}^2E[(\Ys_{k-1})^2]=\frac1{N^2}\sum_{k=1}^N {g_k}^2\Var[\Ys_{k-1}].
  \end{align*}
  By Lemma~\ref{eg1}, we have that $g_k\leq 2^{H-\frac12}g$ and by \cite[Lemma 5.1]{CKP}, we have that $$\sup_{k}\Var[\Ys_{k-1}]<\infty.$$ Consequently, we obtain
  \begin{align*}
  \Var\left[\frac1{N}\Ss^{(2)}_N\right]&\leq\frac{M}{N}\xrightarrow[N\to\infty]{}0
  \end{align*}
  for some constant $M>0$. This gives us the convergence of $\Ss_N^{(2)}$ to $0$ in $L^2(P)$.\\ 
  
  (3) We write $\Ss_N^{(3)}$ as a sum of a random term and a deterministic one. We prove that the variance of the random term converges to $0$ and the deterministic term converges to $g^2(2^{H+\frac12}-2)>0$. Indeed, denoting $\tilde{\Ys}_{k-1}:=\sum_{l=1}^{k-2}j_k(l)\xi_l$, we have that
  \begin{align*}
  \frac1{N}\Ss_N^{(3)}&=\frac1{N}\sum_{k=1}^Ng_{k-1}\xi_{k-1}\Ys_k=\frac1{N}\sum_{k=1}^Ng_{k-1}\xi_{k-1}\tilde{\Ys}_{k-1}+\frac1{N}\sum_{k=1}^Ng_{k-1}j_k(k-1).
  \end{align*}
  From \eqref{eg1} and \cite[Eq. 5.3]{CKP}, we see that $g_{k-1}j_k(k-1)\xrightarrow[k\to\infty]{}g^2(2^{H+\frac12}-2)$. As a consequence, we deduce that
  $$\frac1{N}\sum_{k=1}^Ng_{k-1}j_k(k-1)\xrightarrow[N\to\infty]{}g^2(2^{H+\frac12}-2).$$
  For the random term, since $\tilde{\Ys}_{k-1}$ is independent of $\xi_{k-1}$, and using a similar argument like 
 in the previous part, one obtains that
 $$\frac1{N}\sum_{k=1}^Ng_{k-1}\xi_{k-1}\tilde{\Ys}_{k-1}\xrightarrow[N\to\infty]{L^2(P)}0$$
 and hence the desired result.\\
 
 (4) Directly from Proposition~\ref{mixi} and the law of large numbers for uniformly integrable $L^1$-mixingales (see \cite[Theorem 1]{An}). Indeed, by the two mentioned results we have that
 \begin{align}\label{a}
 \frac1{N}\sum_{k=1}^N\left(\Ys_{k-1}\Ys_k-E[\Ys_{k-1}\Ys_k]\right)\xrightarrow[N\to\infty]{L^1(P)}0.
 \end{align}
 In addition, for $n\geq 4$, we have 
 $$E[\Ys_{n-1}\Ys_n]=\sum\limits_{i=1}^{\frac{n}{4}}j_n(i)j_{n-1}(i)+\sum\limits_{i=\frac{n}{4}+1}^{n-2}j_n(i)j_{n-1}(i).$$
 Using the estimates given in Lemma \ref{eji}, we deduce that the first sum on the right-hand side converges to zero. For the second sum, following 
 the lines of the proof of \cite[Lemma 5.7]{CKP}, we obtain that 
 $$\sum\limits_{i=\frac{n}{4}+1}^{n-2}j_n(i)j_{n-1}(i)=\sum\limits_{k=2}^{\frac{3n}{4}-1}j_n(n-k)j_{n-1}(n-k)\xrightarrow[n\to\infty]{}4g^2\sum\limits_{k=2}^\infty\rho(k)\rho(k-1):=\Vs>0.$$
Consequently, we conclude that
 $$E[\Ys_{n-1}\Ys_n]\xrightarrow[n\to\infty]{}\Vs,$$
and therefore
\begin{equation}\label{cyy}
 \frac1{N}\sum_{k=1}^NE[\Ys_{k-1}\Ys_k]\xrightarrow[N\to\infty]{}\Vs.
\end{equation}
Thus, we have
 \begin{align}\label{b}
 E\left[\left|\frac1{N}\sum_{k=1}^N\Ys_{k-1}\Ys_k-\Vs\right|\right] &\leq E\left[\left|\frac1{N}\sum_{k=1}^N\left(\Ys_{k-1}\Ys_k-E[\Ys_{k-1}\Ys_k]\right)\right|\right]\nonumber\\ 
&\quad +\left|\frac1{N}\sum_{k=1}^N E[\Ys_{k-1}\Ys_k]-\Vs \right|.
\end{align}
By \eqref{a}, \eqref{cyy} and \eqref{b}, it follows immediately that
$$E\left[\left|\frac1{N}\sum_{k=1}^N\Ys_{k-1}\Ys_k-\Vs\right|\right] \xrightarrow[N\to\infty]{}0,$$
and hence
$$\frac1{N}\Ss_N^{(4)}\xrightarrow[N\to\infty]{L^1(P)}\Vs.$$
The proof of (4) is now complete.
\end{proof}

\begin{corollary}\label{AA1conv}
For all $\varepsilon>0$,
 $$P(V_N(\phi^{N})>\vartheta(1-\varepsilon))\xrightarrow[N\to\infty]{}1.$$
\end{corollary}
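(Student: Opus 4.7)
The corollary is essentially an immediate consequence of the convergence in probability $V_N(\phi^{N}) \xrightarrow[N\to\infty]{P} \vartheta$ established in Theorem \ref{conv}, so the plan is very short. The only real task is to translate the symmetric probability bound supplied by convergence in probability into the one-sided statement required here.

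My plan is to fix $\varepsilon>0$ and set $\delta:=\vartheta\varepsilon>0$, which is strictly positive because $\vartheta>0$. By Theorem \ref{conv}, $V_N(\phi^{N}) \to \vartheta$ in probability, so
\[
P\bigl(|V_N(\phi^{N})-\vartheta|<\delta\bigr) \xrightarrow[N\to\infty]{} 1.
\]
Since the event $\{|V_N(\phi^{N})-\vartheta|<\delta\}$ is contained in $\{V_N(\phi^{N})>\vartheta-\delta\}=\{V_N(\phi^{N})>\vartheta(1-\varepsilon)\}$, monotonicity of probability gives
\[
P\bigl(V_N(\phi^{N})>\vartheta(1-\varepsilon)\bigr) \geq P\bigl(|V_N(\phi^{N})-\vartheta|<\delta\bigr) \xrightarrow[N\to\infty]{} 1,
\]
which is the desired conclusion.

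There is no real obstacle here; the corollary is just a restatement of Theorem \ref{conv} in the one-sided form that will be convenient in the subsequent construction of the strong asymptotic arbitrage (where the lower bound $\vartheta(1-\varepsilon)$ on the profit is what will be combined with the stopping procedure). All the work has already been done in the proofs of parts (1)--(4) of Theorem \ref{conv}.
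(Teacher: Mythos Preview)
Your proof is correct and follows exactly the approach indicated in the paper, which simply says the result follows from Theorem~\ref{conv} and the definition of convergence in probability. You have just spelled out the one-line inclusion $\{|V_N(\phi^N)-\vartheta|<\vartheta\varepsilon\}\subset\{V_N(\phi^N)>\vartheta(1-\varepsilon)\}$ that is implicit there.
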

\begin{proof}
 The result follows using Theorem~\ref{conv} and the definition of the convergence in probability.
\end{proof}

\subsection{Admissibility condition through stopping procedure}\label{s32}
The sequence of self-financing strategies $(\phi^N)_{N\geq 1}$ constructed in the previous section gives the possibility to make a strictly positive profit with probability arbitrarily close to one. Now, we proceed to modify our strategies in such a way that the admissibility conditions are satisfied. More precisely, we will stop our self-financing strategies at the first time they fail the admissibility condition. To do so, we split the value process as in \eqref{VN}, and we study the stopping times corresponding to each part.

For each $i=1,2,3,4$ and any sequence of strictly positive numbers $(\varepsilon_N,N\geq 1)$, we define the stopping time
$$T_{\varepsilon_N}^{(N,i)}:=\inf\{k\in\{1,\ldots,N\}:\, \frac{1}{N}\,\Ss_k^{(i)}< -\varepsilon_N\},$$
with the convention that $\inf\emptyset =\infty$. Note that these stopping times have values on $\{1,\dots,N\}\cup\{\infty\}$.

We study the first three stopping times with the help of an extension of the Kolmogorov's maximal inequality. A different approach is used in the study of $T_{\varepsilon_N}^{(N,4)}$.
\subsubsection{Maximal inequalities for the first three terms}
We start with the following generalization of the Kolmogorov's maximal inequality, which fits our setting.
Let $c$ be a strictly positive constant and $W=(W_k)_{k=1}^N$ a sequence of centred random variables. We define the stopping time 
$$T_c(W):=\inf\{k\in\{1,\ldots,N\}:\, |W_k|> c\}.$$
\begin{lemma}[Maximal inequality]\label{mi}
Assume that, for all $k\in\{1,\ldots,N\}$, we have
\begin{equation}\label{cmi}
 E\left[(W_N-W_k)\,W_k\, 1_{\{T_c(W)=k\}}\right]=0.
\end{equation}
Then
$$P\left(\sup\limits_{1\leq i\leq N}|W_i|> c\right)=P(T_c(W)\leq N)\leq \frac{\Var(W_N)}{c^2}.$$
\end{lemma}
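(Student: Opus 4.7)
The plan is to adapt the classical Kolmogorov maximal inequality argument, replacing the martingale increment orthogonality by the hypothesis \eqref{cmi} that is explicitly imposed in the statement.

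First, the identity $P(\sup_{1\leq i\leq N}|W_i|>c)=P(T_c(W)\leq N)$ is immediate from the definition of $T_c(W)$, since $\{\sup_{i\leq N}|W_i|>c\}=\bigcup_{k=1}^N\{T_c(W)=k\}$ (the events $\{T_c(W)=k\}$ for $k=1,\dots,N$ forming a disjoint partition). So the whole task reduces to bounding $P(T_c(W)\leq N)$ by $\Var(W_N)/c^2$.

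Next I would start from $E[W_N^2]\geq E[W_N^2\,\mathbf{1}_{\{T_c(W)\leq N\}}]=\sum_{k=1}^N E[W_N^2\,\mathbf{1}_{\{T_c(W)=k\}}]$, and on each event $\{T_c(W)=k\}$ decompose $W_N=W_k+(W_N-W_k)$ to expand
\[
E[W_N^2\,\mathbf{1}_{\{T_c(W)=k\}}]=E[W_k^2\,\mathbf{1}_{\{T_c(W)=k\}}]+2E[(W_N-W_k)W_k\,\mathbf{1}_{\{T_c(W)=k\}}]+E[(W_N-W_k)^2\,\mathbf{1}_{\{T_c(W)=k\}}].
\]
By hypothesis \eqref{cmi} the middle term vanishes, and the last term is nonnegative, so $E[W_N^2\,\mathbf{1}_{\{T_c(W)=k\}}]\geq E[W_k^2\,\mathbf{1}_{\{T_c(W)=k\}}]$. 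On $\{T_c(W)=k\}$ we have $|W_k|>c$ by definition of $T_c(W)$, hence $W_k^2>c^2$ there, giving $E[W_k^2\,\mathbf{1}_{\{T_c(W)=k\}}]\geq c^2 P(T_c(W)=k)$.

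Summing over $k=1,\dots,N$ yields $E[W_N^2]\geq c^2\,P(T_c(W)\leq N)$. Since $W_N$ is centred, $E[W_N^2]=\Var(W_N)$, and rearranging gives the claimed bound. The argument is essentially routine; the only conceptual point is that assumption \eqref{cmi} is exactly what one needs on the set $\{T_c(W)=k\}$ to kill the cross term in place of a martingale/orthogonality property, so there is no real obstacle provided the assumption is stated and used correctly.
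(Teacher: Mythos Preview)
Your proof is correct and follows essentially the same approach as the paper: both restrict $E[W_N^2]$ to the event $\{T_c(W)\leq N\}$, decompose $W_N=W_k+(W_N-W_k)$ on each $\{T_c(W)=k\}$, use \eqref{cmi} to kill the cross term, drop the nonnegative square term, and then use $|W_k|>c$ on that event. Your write-up is slightly more explicit about the equality $P(\sup_i|W_i|>c)=P(T_c(W)\leq N)$ and about dropping the $(W_N-W_k)^2$ term, but there is no substantive difference.
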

\begin{proof}
We may assume that $\Var(W_N)<\infty$. Note that
\begin{align*}
\Var(W_N)&=E[W_N^2]\geq \sum\limits_{k=1}^{N} E[(W_k +W_N-W_k)^2 1_{\{T_c(W)=k\}}]\\
&\geq \sum\limits_{k=1}^{N} \left(E[W_k^2 1_{\{T_c(W)=k\}}]+ 2E\left[(W_N-W_k)W_k 1_{\{T_c(W)=k\}}\right]\right)\\
&=\sum\limits_{k=1}^{N} E[W_k^2 1_{\{T_c(W)=k\}}]\geq c^2P(T_c(W)\leq N).
\end{align*}
The result follows.
\end{proof}
The next result will be obtained as an application of the previous lemma.
\begin{lemma}\label{mi123}
For each $i=1,2,3$, there is a constant $C^{(i)}>0$, such that
$$P\left(T_{\varepsilon_N}^{(N,i)}\leq N\right)\leq \frac{C^{(i)}}{N\,\varepsilon_N^2}.$$
\end{lemma}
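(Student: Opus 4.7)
The approach is to apply the maximal inequality of Lemma \ref{mi} to each of the three processes, after an appropriate recentring. The starting observation is that, setting $c := N\varepsilon_N$,
$$\{T_{\varepsilon_N}^{(N,i)} \leq N\} \subseteq \{T_c(\Ss^{(i)}) \leq N\},$$
since $\Ss_k^{(i)}/N < -\varepsilon_N$ forces $|\Ss_k^{(i)}| > N\varepsilon_N$. For $i=1$ and $i=2$ the processes $\Ss^{(i)}$ are already centred ($E[\xi_{k-1}\xi_k]=0$ and $E[\xi_k\Ys_{k-1}]=0$ respectively), so this inclusion is enough. For $i=3$ the process has a strictly positive drift: $E[\theta_k^{(3)}]=g_{k-1}j_k(k-1)>0$, converging to $g^2(2^{H+\frac12}-2)$ by Theorem \ref{conv}(3). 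I handle this by splitting $\Ss_k^{(3)}=\tilde{\Ss}_k^{(3)}+D_k$, with $\tilde{\Ss}_k^{(3)}:=\sum_{l=1}^k g_{l-1}\xi_{l-1}\tilde{\Ys}_{l-1}$ centred and $D_k:=\sum_{l=1}^k g_{l-1}j_l(l-1)\geq 0$; since the drift is non-negative, $\{\Ss_k^{(3)}/N<-\varepsilon_N\}\subseteq\{\tilde{\Ss}_k^{(3)}/N<-\varepsilon_N\}$, and Lemma \ref{mi} can then be applied to $\tilde{\Ss}^{(3)}$.

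Verifying hypothesis \eqref{cmi} is essentially routine in all three cases: $W_N-W_k$ is a sum of increments $\theta_l$ with $l>k$, each containing a $\xi$-factor whose conditional expectation given the strictly earlier past vanishes. Concretely, for $i=1$ the term $g_{l-1}g_l\xi_{l-1}\xi_l$ satisfies $E[\xi_{l-1}\xi_l|\Fs_{l-2}]=0$ when $l\geq k+2$, while for $l=k+1$ one conditions on $\Fs_k$ and uses $E[\xi_{k+1}]=0$; for $i=2$ the term $g_l\xi_l\Ys_{l-1}$ has $\Ys_{l-1}\in\Fs_{l-1}$ and $E[\xi_l|\Fs_{l-1}]=0$; for $\tilde{\Ss}^{(3)}$ the term $g_{l-1}\xi_{l-1}\tilde{\Ys}_{l-1}$ has $\tilde{\Ys}_{l-1}\in\Fs_{l-2}$ and $E[\xi_{l-1}|\Fs_{l-2}]=0$. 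Since $W_k\1_{\{T_c(W)=k\}}$ is $\Fs_k$-measurable, a standard conditioning argument gives \eqref{cmi}.

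It remains to show $\Var(W_N)\leq C^{(i)} N$ in each case. For $i=1$, the $\theta_k^{(1)}$ are pairwise uncorrelated (for $k\neq j$ each product $\xi_{k-1}\xi_k\xi_{j-1}\xi_j$ has zero mean) and $\Var(\theta_k^{(1)})=g_{k-1}^2g_k^2$ is uniformly bounded by Lemma \ref{eg1}; for $i=2$ the estimate was already established in the proof of Theorem \ref{conv}(2); for $\tilde{\Ss}^{(3)}$, the same orthogonality yields $\Var(\tilde{\Ss}_N^{(3)})=\sum_{k=1}^N g_{k-1}^2\sum_{l=1}^{k-2}j_k(l)^2$, and splitting the inner sum at $l=k/4$ and applying the two bounds of Lemma \ref{eji} shows that $\sum_{l=1}^{k-2}j_k(l)^2$ is uniformly bounded in $k$. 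Combining with Lemma \ref{mi} gives
$$P\!\left(T_{\varepsilon_N}^{(N,i)}\leq N\right)\leq \frac{\Var(W_N)}{(N\varepsilon_N)^2}\leq \frac{C^{(i)}}{N\varepsilon_N^2}.$$
The main obstacle is the case $i=3$: since $\Ss^{(3)}$ has a positive drift, no Kolmogorov-type inequality applies to it directly; the key observation is that the drift only pushes $\Ss^{(3)}$ away from the barrier $-\varepsilon_N$, so the probability for $\Ss^{(3)}$ is dominated by that for the centred $\tilde{\Ss}^{(3)}$, and the recentring is cost-free.
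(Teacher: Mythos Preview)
Your proof is correct and follows essentially the same approach as the paper: in each case you pass to a two-sided maximal event, verify the orthogonality condition \eqref{cmi} via the presence of a ``fresh'' $\xi$-factor, and bound the variance of $W_N$ by $CN$; for $i=3$ you split off the non-negative drift $\sum_l g_{l-1}j_l(l-1)$ and apply Lemma \ref{mi} to the centred remainder $\tilde{\Ss}^{(3)}$, exactly as the paper does. The only cosmetic difference is that the paper cites \cite[Lemma~5.1]{CKP} directly for the uniform bound on $\sum_l j_k(l)^2$, whereas you rederive it from Lemma \ref{eji}.
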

\begin{proof}
(1) Define the stopping time $$\widetilde{T}_{\varepsilon_N}^{(N,1)}:=T_{\varepsilon_N}\left(\frac{1}{N}\Ss^{(1)}\right),$$
and note that $T_{\varepsilon_N}^{(N,1)}\geq\widetilde{T}_{\varepsilon_N}^{(N,1)}$. Therefore, it is enough to prove the result for $\widetilde{T}_{\varepsilon_N}^{(N,1)}$.
 Since $g_k\leq 2g$ and the random variables $\{\xi_{k-1} \xi_k; k\geq 1\}$ are pairwise independent, we conclude that 
 $$\Var\left(\frac{1}{N}\Ss_N^{(1)}\right)=\frac{1}{N^2}\sum\limits_{\ell=1}^{N}g_{\ell-1}^2 g_\ell^2\leq \frac{(2g)^4}{N}.$$
Note that $\Ss_k^{(1)}1_{\{\widetilde{T}_{\varepsilon_N}^{(N,1)}=k\}}$ is $\sigma(\xi_1,...,\xi_k)$-measurable. In addition, we have that
$$\Ss_N^{(1)}-\Ss_k^{(1)}=\sum\limits_{\ell=k+2}^{N}g_{\ell-1} g_\ell \xi_{\ell-1} \xi_\ell+g_{k} g_{k+1} \xi_{k} \xi_{k+1}.$$
Moreover, $\sum_{\ell=k+2}^{N}g_{\ell-1} g_\ell\, \xi_{\ell-1} \xi_\ell$ is $\sigma(\xi_{k+1},...,\xi_N)$-measurable and 
$$E\left[\xi_{k} \xi_{k+1}\,\Ss_k^{(1)}1_{\{\widetilde{T}_{\varepsilon_N}^{(N,1)}=k\}}\right]=E\left[\xi_{k+1}\right]E\left[ \xi_k\,\Ss_k^{(1)}1_{\{\widetilde{T}_{\varepsilon_N}^{(N,1)}=k\}}\right]=0.$$
Consequently, the condition \eqref{cmi} is satisfied and the result follows as an application of Lemma \ref{mi}.

(2) Define the stopping time $$\widetilde{T}_{\varepsilon_N}^{(N,2)}:=T_{\varepsilon_N}\left(\frac{1}{N}\Ss^{(2)}\right),$$
and note that $T_{\varepsilon_N}^{(N,2)}\geq\widetilde{T}_{\varepsilon_N}^{(N,2)}$. As before, we conclude that it is enough to prove the result for $\widetilde{T}_{\varepsilon_N}^{(N,2)}$.

 Since, for $k>j$, $E[\xi_k\Ys_{k-1}\xi_j\Ys_{j-1}]=E[\xi_k]\,E[\Ys_{k-1}\xi_j\Ys_{j-1}]=0$, we have that
 $$\Var\left(\frac{1}{N}\Ss_N^{(2)}\right)=\frac{1}{N^2}\sum\limits_{\ell=1}^{N}\sum\limits_{i=1}^{\ell-2} g_\ell^2 j_{\ell-1}^2(i)\leq \frac{(2g)^2}{N^2}\sum\limits_{\ell=1}^{N}\sum\limits_{i=1}^{\ell-2} j_{\ell-1}^2(i).$$
 From \cite[Lemma 5.1]{CKP}, the quantities $\sum_{i=1}^{\ell-2} j_{\ell-1}^2(i)$ are uniformly bounded. We conclude that there is $C^{(2)}>0$ such that
 $$\Var\left(\frac{1}{N}\Ss_N^{(2)}\right)\leq\frac{C^{(2)}}{N}.$$
 Additionally, we have that
 $$\Ss_N^{(2)}-\Ss_k^{(2)}=\sum\limits_{\ell=k+1}^{N} g_\ell\xi_\ell\Ys_{\ell-1}.$$
 On the other hand, for all $\ell\in\{k+1,...,N\}$, we have that
 $$E\left[\xi_{\ell} \Ys_{\ell-1}\,\Ss_k^{(2)}1_{\{\widetilde{T}_{\varepsilon_N}^{(N,2)}=k\}}\right]=E\left[\xi_{\ell}\right]E\left[ \Ys_{\ell-1}\,\Ss_k^{(2)}1_{\{\widetilde{T}_{\varepsilon_N}^{(N,2)}=k\}}\right]=0.$$
The condition \eqref{cmi} is verified and the result follows.

(3) Denote $\tilde{\Ys}_{k-1}:=\sum_{l=1}^{k-2}j_k(l)\xi_l$ and note that:
$$\theta_k^{(3)}=g_{k-1} j_k(k-1) + \underbrace{g_{k-1}\xi_{k-1}\tilde{\Ys}_{k-1}}_{=:\tilde{\theta}_k^{(3)}}>\tilde{\theta}_k^{(3)}.$$
As a consequence, for each $n\in\{1,\ldots ,N\}$, we have
$$\Ss_n^{(3)}>\sum\limits_{k=1}^n \tilde{\theta}_k^{(3)}=:\tilde{\Ss}_n^{(3)}.$$
Moreover, if we define
 $$\widetilde{T}_{\varepsilon_N}^{(N,3)}:=T_{\varepsilon_N}\left(\frac{1}{N}\tilde{\Ss}^{(3)}\right),$$
 it follows that $T_{\varepsilon_N}^{(N,3)}\geq \widetilde{T}_{\varepsilon_N}^{(N,3)}.$ Thus, it will be enough to prove the result for $\widetilde{T}_{\varepsilon_N}^{(N,3)}$.
 
Since, for $k>j$, $E[\xi_{k-1}\tilde{\Ys}_{k-1}\xi_{j-1}\tilde{\Ys}_{j-1}]=E[\xi_{k-1}]\,E[\tilde{\Ys}_{k-1}\xi_{j-1}\tilde{\Ys}_{j-1}]=0$, we get
 $$\Var\left(\frac{1}{N}\tilde{\Ss}_N^{(3)}\right)=\frac{1}{N^2}\sum\limits_{\ell=1}^{N}\sum\limits_{i=1}^{\ell-2} g_{\ell-1}^2 j_{\ell}^2(i)\leq \frac{(2g)^2}{N^2}\sum\limits_{\ell=1}^{N}\sum\limits_{i=1}^{\ell-2} j_{\ell}^2(i).$$
 We know from \cite[Lemma 5.1]{CKP} that the quantities $\sum_{i=1}^{\ell-2} j_{\ell}^2(i)$ are uniformly bounded. We conclude that there is $C^{(3)}>0$ such that
 $$\Var\left(\frac{1}{N}\tilde{\Ss}_N^{(3)}\right)\leq\frac{C^{(3)}}{N}.$$
 Additionally, we have
 $$\tilde{\Ss}_N^{(3)}-\tilde{\Ss}_k^{(3)}=\sum\limits_{\ell=k+1}^{N} g_{\ell-1}\xi_{\ell-1}\tilde{\Ys}_{\ell-1}.$$
 On the other hand, for all $\ell\in\{k+1,...,N\}$, we have that
 $$E\left[\xi_{\ell-1} \tilde{\Ys}_{\ell-1}\,\tilde{\Ss}_k^{(3)}1_{\{\widetilde{T}_{\varepsilon_N}^{(N,3)}=k\}}\right]=E\left[\xi_{\ell-1}\right]E\left[ \tilde{\Ys}_{\ell-1}\,\tilde{\Ss}_k^{(3)}1_{\{\widetilde{T}_{\varepsilon_N}^{(N,3)}=k\}}\right]=0.$$
The condition in Lemma \ref{mi} is verified and the result follows.
 \end{proof}
 
\subsubsection{Maximal inequality for the last term}
In the previous three cases, we consider the process $W^{(i)}=\Ss^{(i)}-E[\Ss^{(i)}]$, and we use either a pairwise independence argument or the orthogonality of some random variables to prove that condition \eqref{cmi} is satisfied. The desired results are obtained with the help of Lemma \ref{mi}. For the stopping time $T_{\varepsilon_N}^{(N,4)}$, we can not proceed in the same way, because the random variables $(\Ys_k^*)_{k\geq 1}$ are pairwise correlated. Nevertheless, the key ingredient is again a maximal inequality, this time for the process $(\Ys_k^*)_{k\geq 1}$. 

Let's define the random variables
$$X_{i,k}:=E[\Ys_i^*|\Fs_{i-k}^*]-E[\Ys_i^*|\Fs_{i-k-1}^*],\quad i\in\Nb,\ k\in \Zb.$$
Note that $X_{i,k}=0$ if $k<0$ or $i\leq k+1$. As a consequence, we have that:
$$Y_{n,k}:=\sum\limits_{i=1}^n X_{i,k}=\sum\limits_{i=k+2}^n X_{i,k}.$$
We also denote
$$\Ss_n^*:=\sum\limits_{k=1}^n \Ys_k^*= \Ss_n^{(4)}-E[\Ss_n^{(4)}].$$
The following result provides the desired maximal inequality for $(\Ss_n^*)_{n\geq 1}$. 
\begin{lemma}\label{lmilm}
For all $n\geq 1$, we have that
$$\Ss_n^*=\sum\limits_{k=-\infty}^\infty Y_{n,k}=\sum\limits_{k=0}^{n-2} Y_{n,k}\quad \textrm{a.s.},$$
and for any sequence of strictly positive numbers $(a_k:k\in\Zb)$, we have
\begin{equation}\label{milm}
 E\left[\sup_{n\leq N}|\Ss_n^*|^2\right]\leq 4\left(\sum\limits_{k=0}^{N-2} a_k\right)\left(\sum\limits_{k=0}^{N-2} a_k^{-1}\Var(Y_{N,k})\right).
\end{equation}
\end{lemma}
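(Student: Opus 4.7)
The plan is to establish the decomposition by a telescoping argument applied to the indices that label the conditional expectations, and then to obtain the maximal inequality by combining Doob's $L^2$-maximal inequality (applied to a martingale structure hidden in each level $Y_{\cdot,k}$) with a weighted Cauchy--Schwarz step across the levels.

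First I would verify the telescoping identity $\sum_{k\in\Zb} X_{i,k}=\Ys_i^*$ for each fixed $i\geq 1$. Since $\Ys_i^*$ is $\Fs_{i-1}$-measurable and $\Fs_i^*=\Fs_{i-1}$, one has $E[\Ys_i^*\mid\Fs_j^*]=\Ys_i^*$ for $j\geq i$ and $E[\Ys_i^*\mid\Fs_j^*]=0$ for $j\leq 1$ (by the trivial $\sigma$-algebra and centring). Hence the partial sums $\sum_{k=-M}^{M} X_{i,k}=E[\Ys_i^*\mid\Fs_{i+M}^*]-E[\Ys_i^*\mid\Fs_{i-M-1}^*]$ collapse to $\Ys_i^*-0$ for all $M$ large enough, and only the indices $0\leq k\leq i-2$ contribute nontrivially. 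Summing over $i\in\{1,\ldots,n\}$ and interchanging the finite sums then gives $\Ss_n^*=\sum_{k=-\infty}^{\infty} Y_{n,k}=\sum_{k=0}^{n-2}Y_{n,k}$ almost surely.

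Next, for the maximal inequality, I would fix $k\geq 0$ and observe that $(Y_{n,k})_{n\geq k+2}$ is a square-integrable martingale with respect to the filtration $(\Fs_{n-k}^*)_{n\geq k+2}$. Indeed, $X_{i,k}$ is $\Fs_{i-k}^*$-measurable (so $Y_{n,k}$ is $\Fs_{n-k}^*$-measurable), and by the tower property $E[X_{n+1,k}\mid\Fs_{n-k}^*]=E[\Ys_{n+1}^*\mid\Fs_{n-k}^*]-E[\Ys_{n+1}^*\mid\Fs_{n-k}^*]=0$. Since $Y_{n,k}=0$ for $n\leq k+1$, Doob's $L^2$-maximal inequality yields
\begin{equation*}
E\!\left[\sup_{n\leq N}Y_{n,k}^2\right]=E\!\left[\sup_{k+2\leq n\leq N}Y_{n,k}^2\right]\leq 4\,E[Y_{N,k}^2]=4\,\Var(Y_{N,k}),
\end{equation*}
using that $E[Y_{N,k}]=0$.

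Finally, to couple the levels, I would use Cauchy--Schwarz with the weights $(a_k)$: for every $n\leq N$,
\begin{equation*}
|\Ss_n^*|^2=\Bigl|\sum_{k=0}^{N-2}Y_{n,k}\Bigr|^2\leq \Bigl(\sum_{k=0}^{N-2}a_k\Bigr)\Bigl(\sum_{k=0}^{N-2}a_k^{-1}Y_{n,k}^2\Bigr),
\end{equation*}
taking the supremum over $n\leq N$, pushing it inside the (non-negative) sum, and then taking expectations reduces the problem to the martingale maximal bound obtained in the previous step. Plugging that in gives exactly \eqref{milm}. I do not foresee a genuine obstacle: the only delicate point is making sure the levels with $n<k+2$ are handled correctly so that the sup and the sum can legitimately be interchanged, which is immediate from $Y_{n,k}=0$ in that range.
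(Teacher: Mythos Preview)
Your proposal is correct and follows essentially the same route as the paper: the decomposition via the telescoping sum of martingale differences, the observation that $(Y_{n,k})_{n}$ is a square-integrable $(\Fs^*_{n-k})_n$-martingale, the weighted Cauchy--Schwarz step across the levels, and Doob's $L^2$-maximal inequality. The only cosmetic difference is that the paper invokes an external lemma (McLeish) for the first identity, whereas you spell out the telescoping argument directly; the remainder of the argument is identical.
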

\begin{proof}
 From Proposition \ref{mixi}, we deduce that $(\Ys^*_n)_{n\geq 1}$ is a sequence of square integrable random variables with zero mean. It is also straightforward to see that
$$E[\Ys_n^*|\Fs^*_{-\infty}]=\Ys_n^*-E[\Ys_n^*|\Fs^*_{\infty}]=0\quad \textrm{a.s.},$$
where $\Fs^*_{-\infty}:=\{\emptyset,\Omega\}$ and $\Fs^*_{\infty}:=\sigma(\xi_i:i\geq 1)$. Therefore, the first statement follows as a direct application of \cite[Lemma 1.5]{ML} and the fact that $Y_{n,k}=0$ for $k<0$ and $k>n-2$. For the remaining part, we need to slightly modify the arguments of \cite[Lemma 1.5]{ML}.
First note that, for $k\geq 1$, $(Y_{n,k})_{n\geq 1}$ is a square integrable $(\Fs^*_{n-k})_{n\geq 1}$-martingale. On the other hand, using Cauchy-Schwartz
$$(\Ss_n^*)^2=\left(\sum\limits_{k=0}^{n-2} \sqrt{a_k}\,\frac{Y_{n,k}}{\sqrt{a_k}}\right)^2\leq \left(\sum\limits_{k=0}^{n-2}a_k\right)\left(\sum\limits_{k=0}^{n-2} \frac{Y_{n,k}^2}{a_k}\right)\leq \left(\sum\limits_{k=0}^{N-2}a_k\right)\left(\sum\limits_{k=0}^{N-2} \frac{Y_{n,k}^2}{a_k}\right).$$
Now, we take $\sup_{n\leq N}$ in both extremes of this inequality, then we take expectations, and we apply Doob's inequality to bound the right-hand side. The result follows.
\end{proof}
In order to obtain an explicit upper bound for the left-hand side in \eqref{milm}, we start by studying the variance of $Y_{N,k}$.
\begin{lemma}\label{rxni}
For all $0\leq k< i-1$ we have that
$$X_{i,k}=\xi_{i-k-1}\sum\limits_{\ell=1}^{i-k-2}j_i(\ell,i-k-1)\xi_\ell,$$
where $j_i(\ell,p):=j_i(\ell)j_{i-1}(p)+j_i(p)j_{i-1}(\ell).$ In particular, for each $k\geq 0$, the random variables $(X_{i,k}: i> k+1)$ are centred and pairwise uncorrelated. As a consequence, we have that 
$$\Var(Y_{N,k})=\sum\limits_{i=k+2}^N\sum\limits_{\ell=1}^{i-k-2}j_i(\ell,i-k-1)^2.$$
\end{lemma}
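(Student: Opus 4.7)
My plan is to reuse the computation from the proof of Proposition~\ref{mixi}, applied with two successive time lags. With the notation of that proof, set $P^{(1)} := \sum_{\ell=1}^{i-k-1} j_i(\ell)\xi_\ell$ and $P^{(2)} := \sum_{\ell=1}^{i-k-1} j_{i-1}(\ell)\xi_\ell$, and let $Q^{(1)}, Q^{(2)}$ denote the analogous sums truncated at $i-k-2$. Proposition~\ref{mixi} then gives
$$E[\Ys_i^*|\Fs_{i-k}^*] = P^{(1)}P^{(2)} - E[P^{(1)}P^{(2)}],\qquad E[\Ys_i^*|\Fs_{i-k-1}^*] = Q^{(1)}Q^{(2)} - E[Q^{(1)}Q^{(2)}],$$
whose difference is $X_{i,k}$ by definition.

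Subtracting and using the decomposition $P^{(j)} = Q^{(j)} + j_{\bullet}(i-k-1)\xi_{i-k-1}$, the product $P^{(1)}P^{(2)} - Q^{(1)}Q^{(2)}$ decomposes into a pure $\xi_{i-k-1}^2$ term equal to the constant $j_i(i-k-1)j_{i-1}(i-k-1)$, which cancels against its own expectation, and two cross terms linear in $\xi_{i-k-1}$ whose expectations vanish because $\xi_{i-k-1}$ is independent of the $Q^{(j)}$'s. Collecting the resulting coefficients of $\xi_\ell$ for $1 \leq \ell \leq i-k-2$ produces exactly the symmetric kernel $j_i(\ell,i-k-1) = j_i(\ell)j_{i-1}(i-k-1) + j_i(i-k-1)j_{i-1}(\ell)$ of the statement.

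With the formula $X_{i,k} = \xi_{i-k-1}\sum_{\ell=1}^{i-k-2} j_i(\ell,i-k-1)\xi_\ell$ in hand, centredness follows by conditioning on $\sigma(\xi_1,\dots,\xi_{i-k-2})$. For pairwise uncorrelatedness, I would observe that for $i < i'$ the random variable $X_{i,k}$ is $\sigma(\xi_1,\dots,\xi_{i-k-1})$-measurable, while $X_{i',k}$ carries an outer factor $\xi_{i'-k-1}$ with index strictly larger than $i-k-1$, hence independent of everything appearing in $X_{i,k}$; a conditional-expectation argument then kills the product $E[X_{i,k}X_{i',k}]$. The variance identity follows by orthogonality, $\Var(Y_{N,k}) = \sum_{i=k+2}^N E[X_{i,k}^2]$, and each summand is computed by expanding the square and using $\xi_{i-k-1}^2 = 1$ together with $E[\xi_\ell\xi_{\ell'}] = \delta_{\ell\ell'}$.

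I do not expect any genuine obstacle: the proof is essentially an algebraic rearrangement of the identity already established in Proposition~\ref{mixi}. The one thing to watch is the bookkeeping of index ranges, so that boundary situations (notably $i = k+2$, where the inner sum is empty and $X_{i,k}=0$) are consistent with the general formula.
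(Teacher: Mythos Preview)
Your proposal is correct and follows exactly the route the paper takes: the paper's own proof consists of the single sentence that the claim is ``straightforward from the expression for $E[\Ys_i^*|\Fs_{i-k}^*]$ obtained in the proof of Proposition~\ref{mixi},'' and you have simply written out what that entails. Your algebra, the centredness and uncorrelatedness arguments, and the variance computation are all sound, and your remark about the degenerate boundary case $i=k+2$ is a useful sanity check that the paper omits.
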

\begin{proof}
It is straightforward from the expression for $E[\Ys_i^*|\Fs_{i-k}^*]$ obtained in the proof of Proposition \ref{mixi}.
\end{proof}
Next result gives an explicit upper bound for the left-hand side in \eqref{milm}.
\begin{lemma}\label{mub}
There is a constant $C^*>0$ such that 
$$E\left[\sup_{n\leq N}|\Ss_n^*|^2\right]\leq C^* \ln(N)N^{4H-2}.$$
\end{lemma}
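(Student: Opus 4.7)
The plan is to apply the maximal inequality \eqref{milm} of Lemma~\ref{lmilm} with a carefully chosen sequence of weights $a_k$, the main input being a sharp bound on $\Var(Y_{N,k})$.

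From the formula in Lemma~\ref{rxni}, $\Var(Y_{N,k})=\sum_{i=k+2}^{N}\sum_{\ell=1}^{i-k-2}j_i(\ell,i-k-1)^2$. Expanding $(a+b)^2\leq 2(a^2+b^2)$ in the definition $j_i(\ell,p)=j_i(\ell)j_{i-1}(p)+j_i(p)j_{i-1}(\ell)$, each inner sum separates into products of a tail factor $j_i(i-k-1)^2$ (or its shift by one in $i$) and a truncated quadratic sum $\sum_{\ell=1}^{i-k-2}(j_i(\ell))^2$. First I would bound the tail factor by $C/(k+1)^{3-2H}$ in the main regime $i\geq 4(k+1)/3$ using the second inequality of Lemma~\ref{eji}, and dispose of the residual small-$i$ range (of length $O(k)$) via the first inequality of Lemma~\ref{eji}, verifying that this contribution is absorbed. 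For the truncated partial sum I would invoke the sharper $i$-dependent estimate $\sum_{\ell=1}^{i-k-2}(j_i(\ell))^2\leq C/i^{2-2H}$ which appears as an intermediate step inside the proof of Proposition~\ref{mixi}. Summing over $i$ with $\sum_{i=k+2}^N i^{2H-2}\leq C N^{2H-1}$ (valid since $H>1/2$) yields the key bound
\[\Var(Y_{N,k})\leq C\,\frac{N^{2H-1}}{(k+1)^{3-2H}}.\]

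With this estimate, I would specialize the weights in \eqref{milm} to $a_k:=(k+1)^{2H-2}$. By integral comparison, $\sum_{k=0}^{N-2}a_k\leq C N^{2H-1}$ (again because $H>1/2$), while $a_k^{-1}\Var(Y_{N,k})\leq C N^{2H-1}/(k+1)$ sums to $\sum_{k=0}^{N-2}a_k^{-1}\Var(Y_{N,k})\leq C N^{2H-1}\ln(N)$. Multiplying these two factors and tracking constants through \eqref{milm} delivers the claimed bound $C^*\ln(N)N^{4H-2}$.

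The main technical obstacle is the variance estimate: using instead the coarser $k$-dependent bound $C/(k+1)^{2-2H}$ on the truncated sum of $(j_i(\ell))^2$ gives only $\Var(Y_{N,k})\leq C N/(k+1)^{5-4H}$, from which no choice of $a_k$ recovers the exponent $4H-2$. The extra $i$-integration gained from the $i$-dependent bound is therefore essential. Once the sharp variance estimate is in hand, the choice $a_k=(k+1)^{2H-2}$ is essentially forced by a Cauchy--Schwarz-type balancing of the two factors in \eqref{milm}, so the final algebra is routine.
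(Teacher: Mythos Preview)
Your central estimate
\[\sum_{\ell=1}^{i-k-2}(j_i(\ell))^2\leq \frac{C}{i^{2-2H}}\]
is false, and the argument collapses with it. For $k$ fixed the left side does not tend to zero as $i\to\infty$: already the single term $j_i(i-k-2)^2$ stays bounded away from zero, since $j_i(i-m)\to 2g\rho(m)>0$ (cf.\ the asymptotics used in the proof of Theorem~\ref{conv}(4)). What the proof of Proposition~\ref{mixi} actually establishes is
\[\sum_{\ell=1}^{i-k-2}(j_i(\ell))^2\leq \frac{C}{i^{2-2H}}+\frac{C}{(k+1)^{2-2H}},\]
with the second summand dominating in the relevant regime; the intermediate ``$\leq C^0/k^{2-2H}$'' written there is a slip --- only the final ``$\leq C^0/m^{2-2H}$'' is justified. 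Your resulting variance bound $\Var(Y_{N,k})\leq CN^{2H-1}/(k+1)^{3-2H}$ is therefore also false: for any fixed $k$ the true order of $\Var(Y_{N,k})$ is $N$, not $N^{2H-1}$.

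You are also too quick to dismiss the ``coarser'' bound $\Var(Y_{N,k})\leq CN/(k+1)^{5-4H}$. This is precisely what the paper proves --- via a three-range split of the $\ell$-sum --- and with $a_k=(k+1)^{-1}$ in \eqref{milm} it gives
\[4\Bigl(\sum_{k=0}^{N-2}\frac{1}{k+1}\Bigr)\Bigl(CN\sum_{k=0}^{N-2}(k+1)^{4H-4}\Bigr).\]
For $H>3/4$ the second factor is of order $N\cdot N^{4H-3}=N^{4H-2}$ and one recovers $C^*\ln(N)N^{4H-2}$ exactly as stated. For $H\leq 3/4$ the second factor is of order $N$ (with an extra logarithm at $H=3/4$), so the computation delivers only $C\,N\ln(N)$; this is indeed weaker than the stated bound, but it is more than enough for Corollary~\ref{T4} and everything that follows.
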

\begin{proof}
We note first that
$$j_i(\ell,p)^2\leq 2\left( j_i(\ell)^2j_{i-1}(p)^2+j_i(p)^2j_{i-1}(\ell)^2\right).$$
Using Lemma \ref{rxni}, we see that
$$\frac{\Var(Y_{N,k})}{2}\leq \underbrace{\sum\limits_{i=k+2}^N\sum\limits_{\ell=1}^{i-k-2}j_i(i-k-1)^2j_{i-1}(\ell)^2}_{:=V_{N,k}}+\underbrace{\sum\limits_{i=k+2}^N\sum\limits_{\ell=1}^{i-k-2}j_i(\ell)^2j_{i-1}(i-k-1)^2}_{:=W_{N,k}}.$$
Now, we write $V_{N,k}=V_{N,k}^1+V_{N,k}^2+V_{N,k}^3$, where
\begin{align*}
V_{N,k}^1&:=\sum\limits_{i=k+2}^{\frac{4(k+1)}{3}\wedge N}j_i(i-k-1)^2\sum\limits_{\ell=1}^{i-k-2}j_{i-1}(\ell)^2,\\
V_{N,k}^2&:=\sum\limits_{i=\frac{4(k+1)}{3}\wedge N+1}^{N}j_i(i-k-1)^2\sum\limits_{\ell=1}^{\frac{i-1}{4}}j_{i-1}(\ell)^2,\\
V_{N,k}^3&:=\sum\limits_{i=\frac{4(k+1)}{3}\wedge N+1}^{N}j_i(i-k-1)^2\sum\limits_{\ell=\frac{i+3}{4}}^{i-k-2}j_{i-1}(\ell)^2.
\end{align*}
For $V_{N,k}^1$, we use Lemma \ref{eji} to obtain, that
\begin{align*}
 V_{N,k}^1&\leq C_1^4\sum\limits_{i=k+2}^{\frac{4(k+1)}{3}}\frac{1}{(i-1)^{8-8H}(i-k-1)^{2H-1}}\sum\limits_{\ell=1}^{i-k-2}\frac{1}{\ell^{2H-1}}\\
 &\leq \frac{{C_1^4}}{(2-2H)}\sum\limits_{i=k+2}^{\frac{4(k+1)}{3}}\frac{(i-k-2)^{2-2H}}{(i-1)^{8-8H}(i-k-1)^{2H-1}}\\
 &\leq \frac{{C_1^4}}{(2-2H)(k+1)^{6-6H}}\sum\limits_{i=k+2}^{\frac{4(k+1)}{3}}\frac{1}{(i-k-1)^{2H-1}}\leq \frac{\widehat{C}_1}{(k+1)^{4-4H}},
\end{align*}
where $\widehat{C}_1>0$ is an appropriate constant. For the other terms, we assume that $\frac{4(k+1)}{3}\leq N$, otherwise they are trivially equal to zero. Thus, for  $V_{N,k}^2$, we have
\begin{align*}
 V_{N,k}^2&\leq \frac{(C_1\, C_2)^2}{(k+1)^{3-2H}}\sum\limits_{i=\frac{4(k+1)}{3}+1}^{N}\frac{1}{(i-1)^{4-4H}}\sum\limits_{\ell=1}^{\frac{i-1}{4}}\frac{1}{\ell^{2H-1}}\\
 &\leq \frac{(C_1\, C_2)^2}{(2-2H)(k+1)^{3-2H}}\sum\limits_{i=2}^{N}\frac{1}{(i-1)^{2-2H}}\leq \frac{\widehat{C}_2 N^{2H-1}}{(k+1)^{3-2H}},\\
 \end{align*}
where $\widehat{C}_2>0$ is a well chosen constant. Similarly, for the last term we have
\begin{align*}
 V_{N,k}^3&\leq \frac{C_2^2}{(k+1)^{3-2H}}\sum\limits_{i=\frac{4(k+1)}{3}+1}^{N}\sum\limits_{\ell=k+1}^{\frac{3(i-1)}{4}}j_{i-1}(i-1-\ell)^2\leq \frac{\widehat{C}_3 N}{(k+1)^{5-4H}},
 \end{align*}
 where $\widehat{C}_3>0$ is a well chosen constant. We conclude that, there exists $C_0>0$ such that 
 $$V_{N,k}\leq\frac{C_0 N}{(k+1)^{5-4H}}.$$
for all $k\leq N-2$. An upper bound of the same order for $W_{N,k}$ can be obtained using similar arguments. Therefore, there is $C^*>0$, such that
 $$\Var(Y_{N,k})\leq\frac{C^* N}{(k+1)^{5-4H}}.$$
The result follows by plugging this upper bound in \eqref{milm} with $a_k:=(k+1)^{-1}$. 
\end{proof}
As a consequence of the previous results, we obtain the following analogue of Lemma \ref{mi123} for $T_{\varepsilon_N}^{(N,4)}$.
\begin{corollary}\label{T4}
There is a constant $C^{(4)}>0$ such that
$$P\left(T_{\varepsilon_N}^{(N,4)}\leq N\right)\leq \frac{C^{(4)}\ln(N)}{N^{4-4H}\varepsilon_N^2}.$$
\end{corollary}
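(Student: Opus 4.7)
The plan is to reduce the bound on $P(T_{\varepsilon_N}^{(N,4)}\leq N)$ to the maximal inequality established in Lemma~\ref{mub} by separating deterministic from random contributions. Recall that $\Ss_n^{(4)}=\Ss_n^*+E[\Ss_n^{(4)}]$, and that by the computation of $E[\Ys_{k-1}\Ys_k]=\sum_{i=1}^{k-2}j_k(i)j_{k-1}(i)$ together with the positivity of the coefficients $j_k(i)$ (which is evident from their integral definition), each term $E[\Ys_{k-1}\Ys_k]$ is nonnegative. Consequently $E[\Ss_n^{(4)}]\geq 0$ for every $n\in\{1,\dots,N\}$.

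Using this nonnegativity, if $\tfrac{1}{N}\Ss_k^{(4)}<-\varepsilon_N$ then a fortiori $\tfrac{1}{N}\Ss_k^*<-\varepsilon_N$, so
$$\{T_{\varepsilon_N}^{(N,4)}\leq N\}\subseteq\Big\{\inf_{1\leq k\leq N}\tfrac{1}{N}\Ss_k^*<-\varepsilon_N\Big\}\subseteq\Big\{\sup_{1\leq k\leq N}|\Ss_k^*|>N\varepsilon_N\Big\}.$$
A direct application of Markov's inequality to the right-hand side then gives
$$P\left(T_{\varepsilon_N}^{(N,4)}\leq N\right)\leq \frac{E\bigl[\sup_{n\leq N}|\Ss_n^*|^2\bigr]}{N^2\varepsilon_N^2},$$
and Lemma~\ref{mub} bounds the numerator by $C^*\ln(N) N^{4H-2}$. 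Dividing through yields
$$P\left(T_{\varepsilon_N}^{(N,4)}\leq N\right)\leq \frac{C^*\ln(N)}{N^{4-4H}\varepsilon_N^2},$$
which is the claim with $C^{(4)}:=C^*$.

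The only nonroutine observation is the sign of $E[\Ss_n^{(4)}]$; since everything reduces to inspecting $j_k(i)j_{k-1}(i)\geq 0$, this is immediate. Thus the corollary is essentially a packaging of Lemma~\ref{mub}, and no genuine obstacle is expected.
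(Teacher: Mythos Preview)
Your proof is correct and follows essentially the same approach as the paper: both use the nonnegativity of $E[\Ss_n^{(4)}]$ to reduce to a bound on $\sup_{n\leq N}|\Ss_n^*|$, then apply Tchebychev's inequality together with Lemma~\ref{mub}. The only cosmetic difference is that the paper introduces an auxiliary stopping time $T_{\varepsilon_N}^{(N,*)}$ before passing to the supremum, while you work directly with the set inclusions.
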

\begin{proof}
First, note that, 
$$E\left[\Ss_n^{(4)}\right]=\sum_{k=3}^{n}\sum_{i=1}^{k-2}j_{k}(i)j_{k-1}(i)\geq 0.$$
Therefore, we have that
$$\Ss_n^{(4)}=\Ss_n^*+E[\Ss_n^{(4)}]\geq \Ss_n^*.$$
Consequently, if we define the stopping time $T_{\varepsilon_N}^{(N,*)}$ as follows
$$T_{\varepsilon_N}^{(N,*)}:=\inf\left\{k\in\{1,\ldots,N\}:\, \frac{1}{N} |\Ss_k^{*}|> \varepsilon_N\right\},$$
then $T_{\varepsilon_N}^{(N,4)}\geq T_{\varepsilon_N}^{(N,*)}$. In particular, we deduce that 
$$P\left(T_{\varepsilon_N}^{(N,4)}\leq N\right)\leq P\left(T_{\varepsilon_N}^{(N,*)}\leq N\right)=P\left(\sup_{n\leq N}\frac1{N}|\Ss_n^*|> \varepsilon_N\right).$$
The result follows as an application of the Tchebychev inequality and Lemma \ref{mub}.
\end{proof}
\subsection{A strong asymptotic arbitrage}
In this section, using the results of section \ref{s32}, we modify the sequence $(\phi^N)_{N\geq 1}$ constructed in section \ref{s31}, in order to construct an explicit strong asymptotic arbitrage. A first modification will lead to a sequence of self-financing strategies $(\hat{\phi}^N)_{N\geq 1}$ providing a strictly positive profit with probability arbitrarily close to one and verifying the admissibility conditions. Finally, after a second modification, we will obtain a new sequence of self-financing strategies $(\psi^N)_{N\geq 1}$ leading to the desired strong asymptotic arbitrage.

The sequence $(\hat{\phi}^N)_{N\geq 1}$ is defined as follows. The position in stock is given by
$$\hat{\phi}_k^{1,N}:=1_{\{k<T_N\}}\phi_k^{1,N},\quad k\in\{-1,0,...,N\},$$ where 
\begin{equation}\label{stoptime}
 T_N:=T_{\varepsilon_N}^{(N,1)}\wedge T_{\varepsilon_N}^{(N,2)}\wedge \left(T_{\varepsilon_N}^{(N,3)}-1\right)\wedge \left(T_{\varepsilon_N}^{(N,4)}-1\right),
\end{equation}
and the position in bond is derived from \eqref{selffineq} setting $\lambda_N=0$ and $\hat{\phi}_{-1}^{0,N}=0$. Note that, since the random variables $\Ss_n^{(3)}$ and $\Ss_n^{(4)}$ are $\Fs_{n-1}$-measurable, $T_{\varepsilon_N}^{(N,3)}-1$ and $T_{\varepsilon_N}^{(N,4)}-1$ are stopping times with respect to $(\Fs_n)_{n=0}^N$. Clearly, $T_{\varepsilon_N}^{(N,1)}$ and $T_{\varepsilon_N}^{(N,2)}$ are also stopping times with respect to $(\Fs_n)_{n=0}^N$, and consequently, $T_N$ as well.

By construction, the corresponding value process is given by
$$V_n(\hat{\phi}^N)=\frac{1}{N}\sum\limits_{i=1}^{4}\Ss_{n\wedge T_N}^{(i)}.$$
In particular, we have
\begin{equation}\label{fac2}
 V_n(\hat{\phi}^N)=V_{n\wedge T_N}(\phi^N)\geq -4\,\varepsilon_N + \frac{1}{N}\left(\theta_{T_N}^{(1)}+\theta_{T_N}^{(2)}\right)1_{\{n\geq T_N\}}.
\end{equation}
Next lemma provides a uniform control for the second term on the right-hand side of \eqref{fac2}.
\begin{lemma}\label{th12}
For $i=1,2,3$, there exists a constant $C_{i}^\theta>0$ such that
$$\sup\limits_{1\leq n\leq N}|\theta_{n}^{(i)}|\leq C_{i}^\theta\,N^{H-\frac{1}{2}}.$$
\end{lemma}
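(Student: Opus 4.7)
The plan is to treat $i=1$ by a trivial bound, and $i=2,3$ by a pathwise estimate of $|\Ys_n|$ obtained by summing the coefficients $j_n(\cdot)$ with the help of Lemma \ref{eji}. Since $H>1/2$ we have $N^{H-1/2}\geq 1$, so any uniform constant bound is automatically of the claimed form; this deals with $i=1$, where $|\theta_k^{(1)}|=g_{k-1}g_k\leq (2g)^{H-1/2+1/2}\cdot 4\leq 4g^2\cdot 2^{2H-1}$ (directly from Lemma \ref{eg1}) is uniformly bounded in $k$.

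For $i=2,3$, first note that $|\theta_k^{(2)}|\leq g_k|\Ys_{k-1}|$ and $|\theta_k^{(3)}|\leq g_{k-1}|\Ys_k|$, and $g_n\leq 2g$ by Lemma \ref{eg1}. So the task reduces to proving an estimate of the form
\[
|\Ys_n|\leq \sum_{\ell=1}^{n-1}j_n(\ell)\leq K\,n^{H-\frac12}
\]
for some constant $K>0$ independent of $n$. Once this is established, the desired bound follows because $n\leq N$ implies $n^{H-1/2}\leq N^{H-1/2}$.

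The key step is to split the sum at $n/4$ and apply the two regimes of Lemma \ref{eji}. For the low-index part, using the first inequality of Lemma \ref{eji},
\[
\sum_{\ell=1}^{\lfloor n/4\rfloor}j_n(\ell)\leq \frac{C_1}{n^{2-2H}}\sum_{\ell=1}^{\lfloor n/4\rfloor}\frac{1}{\ell^{H-\frac12}}\leq \frac{K_1}{n^{2-2H}}\cdot n^{\frac32-H}=K_1\,n^{H-\frac12},
\]
where we used that $H-1/2\in(0,1/2)$ so the inner partial sum is of order $n^{3/2-H}$. For the high-index part, reindexing via $m=n-\ell$ and using the second inequality of Lemma \ref{eji},
\[
\sum_{\ell=\lfloor n/4\rfloor+1}^{n-1}j_n(\ell)=\sum_{m=1}^{n-1-\lfloor n/4\rfloor}j_n(n-m)\leq C_2\sum_{m=1}^{\lceil 3n/4\rceil}\frac{1}{m^{\frac32-H}}\leq K_2\,n^{H-\frac12},
\]
since $3/2-H\in(1/2,1)$ makes the partial sum of order $n^{H-1/2}$. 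Combining the two pieces yields the required bound on $|\Ys_n|$, hence the desired $\sup_{1\leq n\leq N}|\theta_n^{(i)}|\leq C_i^\theta N^{H-1/2}$ for $i=2,3$.

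I do not expect any serious obstacle: the only subtle point is to notice that both regimes produce exactly the same exponent $H-1/2$ (a reflection of the fact that the growth of $|\Ys_n|$ is governed by the Hurst parameter of a scaled fractional Brownian increment), and to be careful that when $n-1-\lfloor n/4\rfloor\leq 0$ or $\lfloor n/4\rfloor=0$ one of the two partial sums is empty and the bound still trivially holds.
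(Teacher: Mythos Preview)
Your argument is correct and is precisely the intended one: the paper's proof is the single line ``It follows from the definition of the random variables $\theta_{n}^{(i)}$ and Lemma \ref{eji}'', and you have simply written out those details (bounding $g_n$ via Lemma \ref{eg1} and $|\Ys_n|\leq\sum_\ell j_n(\ell)$ via the two regimes of Lemma \ref{eji}, exactly as the paper later does in \eqref{Xk}). The displayed inequality in your $i=1$ case is garbled---you just want $g_{k-1}g_k\leq (g\,2^{H-1/2})^2=g^2\,2^{2H-1}$---but the conclusion and the rest of the proof are fine.
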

\begin{proof}
It follows from the definition of the random variables $\theta_{n}^{(i)}$ and Lemma \ref{eji}. 
\end{proof}
Now, motivated by our previous results, we choose
$$\varepsilon_N:=\frac{\ln(N)}{N^{\frac{1}{2}\wedge(2-2H)}}\quad\textrm{ and }\quad\hat{c}_N:=4\varepsilon_N+\frac{C_{1,2}}{N^{\frac{3}{2}-H}},$$
where $C_{1,2}=C_1^\theta+C_2^\theta$. 

Finally, for each $N\geq 1$, we define $\psi^N=(\psi^{0,N},\psi^{1,N})$ as follows. The position in stock, $\psi^{1,N}$, is given by:
\begin{equation}\label{str}
\psi_k^{1,N}:=\frac{1}{\sqrt{\hat{c}_N}}\hat{\phi}_k^{1,N},\quad k\in\{-1,0,...,N\},
\end{equation}
and the position in bond, $\psi^{0,N}$, is constructed as before, through the self-financing conditions \eqref{selffineq}, setting $\lambda_N=0$ and $\psi_{-1}^{0,N}=0$. 
\begin{theorem}
The sequence of self-financing strategies $(\psi^N)_{ N\geq 1}$ provides a strong asymptotic arbitrage in the large fractional binary market. 
\end{theorem}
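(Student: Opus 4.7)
The plan is to verify the two conditions of Definition \ref{SAAd} for the sequence $(\psi^N)_{N \geq 1}$ in the frictionless case $\lambda_N = 0$. By construction each $\psi^N$ is self-financing with zero endowment, and the rescaling gives $V_n(\psi^N) = V_n(\hat{\phi}^N)/\sqrt{\hat{c}_N}$ for every $n$, so the whole analysis reduces to controlling the truncated value process $V_n(\hat{\phi}^N) = V_{n \wedge T_N}(\phi^N)$ that was set up in Subsection \ref{s32}.

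For the admissibility condition, I would start from the pointwise lower bound \eqref{fac2} and combine it with the uniform estimate $|\theta_k^{(i)}| \leq C_i^\theta N^{H - 1/2}$ of Lemma \ref{th12} for $i = 1, 2$. This yields $V_n(\hat{\phi}^N) \geq -4\varepsilon_N - C_{1,2}/N^{3/2 - H} = -\hat{c}_N$ for every $n \in \{0,\ldots,N\}$, hence $V_n(\psi^N) \geq -\sqrt{\hat{c}_N}$. Since $\varepsilon_N \to 0$ and $N^{H - 3/2} \to 0$ (using $H \in (1/2,1)$), the sequence $c_N := \sqrt{\hat{c}_N}$ tends to zero and condition (1) follows.

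The core of the argument is then to show $P(T_N \leq N) \to 0$. A union bound combined with Lemma \ref{mi123} and Corollary \ref{T4} gives
$$P(T_N \leq N) \leq \sum_{i=1}^{3} \frac{C^{(i)}}{N \varepsilon_N^2} + \frac{C^{(4)} \ln(N)}{N^{4 - 4H} \varepsilon_N^2}.$$
Substituting $\varepsilon_N = \ln(N)/N^{1/2 \wedge (2 - 2H)}$ and distinguishing the cases $H \leq 3/4$ and $H > 3/4$, a short computation shows every summand is of order $1/\ln(N)$, so the right-hand side vanishes. This is where I expect the main obstacle: the sequence $\varepsilon_N$ must be small enough to guarantee $\hat{c}_N \to 0$ and simultaneously large enough to control the four probabilities $P(T_{\varepsilon_N}^{(N,i)} \leq N)$, and the exponent $1/2 \wedge (2 - 2H)$ combined with the logarithmic factor is tuned precisely so that both constraints are met for every Hurst parameter $H > 1/2$.

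Finally, on the event $\{T_N > N\}$ the truncation is inactive, so $V_N(\hat{\phi}^N) = V_N(\phi^N)$. Applying Corollary \ref{AA1conv} with $\varepsilon = 1/2$ together with the previous estimate then gives $P(V_N(\hat{\phi}^N) > \vartheta/2) \to 1$. Dividing by $\sqrt{\hat{c}_N}$ and setting $C_N := \vartheta/(2\sqrt{\hat{c}_N})$, which tends to infinity since $\hat{c}_N \to 0$, we obtain $P(V_N(\psi^N) \geq C_N) \to 1$, establishing condition (2) and completing the verification that $(\psi^N)_{N \geq 1}$ realises a strong asymptotic arbitrage.
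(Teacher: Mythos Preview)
Your proposal is correct and follows essentially the same route as the paper's proof: the same rescaling identity $V_n(\psi^N)=V_n(\hat{\phi}^N)/\sqrt{\hat c_N}$, the admissibility lower bound via \eqref{fac2} and Lemma~\ref{th12}, the union bound on $P(T_N\leq N)$ using Lemma~\ref{mi123} and Corollary~\ref{T4}, and the conclusion through Corollary~\ref{AA1conv} on the event $\{T_N>N\}$. Your explicit case split at $H=3/4$ and the identification of the worst decay rate $1/\ln N$ make the choice of $\varepsilon_N$ more transparent than in the paper, which simply invokes ``the choice of $\varepsilon_N$''.
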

\begin{proof}
In order to have a strong asymptotic arbitrage, we need to show that the two conditions of Definition~\ref{SAAd} are satisfied. 
More precisely, we prove that these two conditions are verified for 
$$c_N:=\sqrt{\hat{c}_N}\xrightarrow[N\rightarrow\infty]{} 0\quad\textrm{and}\quad C_N:=\frac{\vartheta}{2\sqrt{\hat{c}_N}}\xrightarrow[N\rightarrow\infty]{} \infty.$$
Note that, from Lemma \ref{th12} and equation \eqref{fac2}, the self-financing strategy $\hat{\phi}^{N}$ is $\hat{c}_N$-admissible. Since, in addition
$$V_k(\psi^{N})=\frac{1}{c_N}V_k(\hat{\phi}^{N}),\quad k\in\{0,...,N\},$$
we deduce that ${\psi}^{N}$ is $c_N$-admissible. Regarding the second condition, we use the convergence behaviour of $V_N(\phi^{N})$ given in Corollary~\ref{AA1conv}. First, note that $$\{T_{\varepsilon_N}^{(N,i)}-1\leq N\}=\{T_{\varepsilon_N}^{(N,i)}\leq N\},$$
and then, from the choice of $\varepsilon_N$ and Lemma \ref{mi123} and Corollary \ref{T4}, we obtain that
\begin{equation}\label{tntz}
 P(T_N\leq N)\leq\sum\limits_{i=1}^4 P\left(T_{\varepsilon_N}^{(N,i)}\leq N\right)\xrightarrow[N\rightarrow\infty]{} 0.
\end{equation}
On the other side, over the set $\{N<T_N\}$, we have
$$V_N(\psi^{N})=\frac{1}{c_N}V_N(\phi^{N}).$$
In  particular, we get
\begin{align*}
P\left(V_N(\phi^{N})>\vartheta/2\right)&=P\left(\{V_N(\phi^{N})>\vartheta/2\}\cap\{T_N\leq N\}\right)\\
&\qquad+P\left(\{V_N(\phi^{N})>\vartheta/2\}\cap\{T_N> N\}\right)\\
&\leq P\left(T_N\leq N\right)+P\left(V_N(\psi^{N})>C_N\right).
\end{align*}
Letting now $N\to\infty$ and applying the results of Corollary~\ref{AA1conv} and \eqref{tntz}, we get that 
$$\lim_{N\to\infty}P\left(V_N(\psi^{N})>C_N\right)=1.$$
The desired result is then proven.
\end{proof}
\section{Strong asymptotic arbitrage with transaction costs}\label{S4}

In this section, we let each $N$-fractional binary market be subject to $\la_N$ transaction costs, and we show that there exists a strong asymptotic arbitrage if the sequence of transaction costs $(\la_N)_{N\geq1}$ converges to zero fast enough. The corresponding sequence of self-financing strategies $(\psi^N(\lambda_N))_{N\geq 1}$ is constructed as follows. The position in stock is given by $\psi^{1,N}$, as in \eqref{str} of the frictionless case. The position in bond, $\psi^{0,N}(\lambda_N)$, is constructed from $\psi^{1,N}$ through the $\lambda_N$-self-financing conditions \eqref{selffineq}, setting $\psi_{-1}^{0,N}(\lambda_N):=0$.
\begin{theorem}
 The self-financing strategies  $(\psi^N(\lambda_N))_{N\geq 1}$, where
  $$\lambda_N=o\left(\frac{\sqrt{\ln{N}}}{N^{(2H-\frac14)\wedge(H+\frac12)}}\right),$$
 provide a strong asymptotic arbitrage in the large fractional binary markets with $(\la_N)_{N\geq 1}$ transaction costs.
\end{theorem}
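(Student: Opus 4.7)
The plan is to imitate the proof of the previous section, treating the transaction-cost contribution as a perturbation of the frictionless value process. Since the stock position $\psi^{1,N}$ is the same as in the frictionless case, the identity \eqref{value} yields
$$V_n^{\la_N}(\psi^N(\la_N)) \;=\; V_n(\psi^N) \;-\; \la_N R_n^N,$$
where $R_n^N\geq 0$ collects the transaction-cost terms in \eqref{value}. The previous theorem already provides $V_n(\psi^N)\geq -c_N$ for every $n$ and $V_N(\psi^N)\geq C_N := \vartheta/(2c_N)$ on the event $\{T_N>N\}\cap\{V_N(\phi^N)>\vartheta/2\}$, whose probability tends to $1$. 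Hence it suffices to exhibit a bound $R_n^N\leq B_N$, uniform in $n$, such that
$$\la_N B_N = o(c_N)\qquad\text{and}\qquad \la_N B_N = o(1/c_N).$$
The first makes the new admissibility constant $c_N^\sharp := c_N + \la_N B_N$ tend to $0$, and the second keeps $V_N^{\la_N}(\psi^N(\la_N)) \geq C_N - \la_N B_N = (1-o(1))C_N\to\infty$ on the same high-probability event.

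To construct $B_N$, I would split $R_n^N$ into a continuous-trading piece (from $k<T_N$) and a liquidation jump at $k = T_N\wedge N$, where $\psi^{1,N}$ drops to zero. Using $\psi^{1,N}_k = c_N^{-1}\ind_{\{k<T_N\}}N^{H-1}X_k/S^N_k$ and $\De_k S^N = (X_k/N^H)\,S^N_{k-1}$, a direct computation gives, for $k<T_N$,
$$\De_k\psi^{1,N}\cdot S^N_k \;=\; \frac{1}{c_N}\left[\frac{\De_k X}{N^{1-H}}\;-\;\frac{X_{k-1}X_k}{N}\right], \qquad \psi^{1,N}_{k-1}\De_k S^N \;=\; \frac{X_{k-1}X_k}{c_N\,N},$$
so the continuous piece is dominated by $c_N^{-1}N^{H-1}\sum_{k<T_N}|\De_k X|$ plus lower-order contributions involving $\sum_k|X_{k-1}X_k|/N$ (bounded by Khintchine's inequality and \cite[Lemma 5.1]{CKP}). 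The liquidation piece is of size at most $|\psi^{1,N}_{T_N-1}|S^N_{T_N}\leq Cc_N^{-1}N^{2H-3/2}$, using the pointwise estimate $|X_k|\leq C\,k^{H-1/2}$ obtained by summing the bounds of Lemma \ref{eji} together with Lemma \ref{eg1}. For the continuous piece, $\De_k X = j_k(k-1)\xi_{k-1} + g_k\xi_k - g_{k-1}\xi_{k-1} + \sum_{i=1}^{k-2}(j_k(i)-j_{k-1}(i))\xi_i$ is an orthogonal decomposition with uniformly bounded variance (again by Lemma \ref{eji} and Lemma \ref{eg1}), and an $L^2$-maximal inequality in the spirit of Lemma \ref{mub} provides the needed high-probability control (producing the extra $\sqrt{\ln N}$ factor). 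Combining both contributions yields a bound of the form $B_N = C\,c_N^{-1}\sqrt{\ln N}\,N^{\beta(H)}$, with $\beta(H)$ calibrated so that, together with the explicit scaling $c_N\sim\sqrt{\ln N}/N^{(1/4)\wedge(1-H)}$, both requirements above become precisely the hypothesis $\la_N = o(\sqrt{\ln N}/N^{(2H-1/4)\wedge(H+1/2)})$.

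Once $B_N$ is in place, the conclusion follows from Definition \ref{SAAd} exactly as in the frictionless case: on the high-probability event $\{T_N>N\}\cap\{V_N(\phi^N)>\vartheta/2\}$ one has $V_N^{\la_N}(\psi^N(\la_N))\geq (1-o(1))C_N\to\infty$, and uniformly in $n$ one has $V_n^{\la_N}(\psi^N(\la_N))\geq -c_N^\sharp$ with $c_N^\sharp\to 0$. The main obstacle is the continuous-trading estimate in the regime $H<3/4$: in the regime $H>3/4$ the rough pointwise bound $|\De_k X|\lesssim 1$ already gives $\sum_k|\De_k X|\lesssim N$, which is enough to match the announced threshold, but for $H<3/4$ the naive sum is too crude and one must exploit the orthogonal structure of $\De_k X$ via an $L^2$-maximal argument analogous to the one used for $\Ys_k^*$ in Lemma \ref{mub} (whence the factor $\sqrt{\ln N}$ and the sharper exponent $2H-1/4$).
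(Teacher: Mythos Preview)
Your global plan---write $V_n^{\la_N}(\psi^N(\la_N))=V_n(\psi^N)-\la_N R_n^N$ and show that $\la_N R_n^N$ is negligible---is exactly the paper's approach. However, you have misdiagnosed where the work lies in bounding $R_n^N$, and this sends you down an unnecessary and ultimately problematic probabilistic detour.

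The paper's bound on the transaction-cost term is \emph{purely deterministic}. Instead of decomposing through $\De_k X$, it uses the crude triangle inequality
\[
\bigl|\De_k\bigl[(\hat{\phi}^{1,N})^{\pm}S^N\bigr]\bigr|\;\leq\;|\hat{\phi}^{1,N}_k|S_k^N+|\hat{\phi}^{1,N}_{k-1}|S_{k-1}^N\;\leq\;\frac{|X_k|+|X_{k-1}|}{N^{1-H}},
\]
together with the \emph{pathwise} estimate $|X_k|\leq\sum_i j_k(i)+g_k\leq C\,k^{H-1/2}$, which follows directly from Lemma~\ref{eji} and Lemma~\ref{eg1}. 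Summing gives $\sum_{k\leq N}|X_k|\leq C\,N^{H+1/2}$, hence each of the three transaction-cost pieces in \eqref{value} is bounded by a constant times $N^{2H-1/2}$, uniformly in $n$ and in $\omega$. This yields the almost-sure bound $R_n^N\leq c_*\,c_N^{-1}N^{2H-1/2}$, valid for \emph{all} $H\in(1/2,1)$; no separate treatment of the regime $H<3/4$ is needed. The factor $\sqrt{\ln N}$ in the stated threshold does not arise from any maximal inequality on the transaction costs: it is already present in $c_N=\sqrt{\hat c_N}$ from the frictionless stopping procedure, and the threshold is simply $\la_N=o(c_N/N^{2H-1/2})$.

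Your proposed $L^2$-maximal argument for $\sum_k|\De_k X|$ would deliver only high-probability control of $R_n^N$. That is not enough for condition~(1) in Definition~\ref{SAAd}, which must hold for every $n$ and every $\omega$; and since the strategies $\psi^N(\la_N)$ are already fixed (same stock position and same stopping time $T_N$ as in the frictionless case), you cannot repair this by introducing a further stopping time without changing the theorem's statement. A minor additional point: for admissibility you only need $\la_N B_N=o(1)$, not $o(c_N)$; the binding constraint is $\la_N\,c_N^{-1}N^{2H-1/2}\to 0$, which is exactly the announced rate.
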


\begin{proof}
In order to show the first condition of Definition~\ref{SAAd}, we have to make sure that the admissibility condition in the presence of transaction costs is fulfilled. Since $\psi_k^{1,N}=\frac{1}{c_N}\hat{\phi}_k^{1,N}$, we have that
\begin{equation}\label{vpr}
   V_n^{\la_N}(\psi^{N}(\lambda_N))=\frac{1}{c_N}\,V_n^{\la_N}(\hat{\phi}^{N}(\lambda_N)),
\end{equation}
where $\hat{\phi}^N(\lambda_N)=(\hat{\phi}^{0,N}(\lambda_N),\hat{\phi}^{1,N})$ and $\hat{\phi}^{0,N}(\lambda_N)$ is determined from $\hat{\phi}^{1,N}$ by means of the $\lambda_N$-self-financing conditions \eqref{selffineq}. Additionally, from \eqref{value} we deduce that
\begin{equation}\label{abc}
  V_n^{\la_N}(\hat{\phi}^{N}(\lambda_N))=V_0^{\la_N}(\hat{\phi}^{N}(\lambda_N))+V_n(\hat{\phi}^{N})-\lambda_N\left(\Vs_n^1+\Vs_n^2+\Vs_n^3\right),
\end{equation}
where
 \begin{align*}
\Vs_n^1&:=\sum\limits_{k=1}^{n}\ind_{\{\Delta_k\hat{\phi}^{1,N}\geq 0\}}\,\Delta_k\left[{(\hat{\phi}^{1,N})}^+ S^{N}\right],\\
\Vs_n^2&:=\sum\limits_{k=1}^{n}\ind_{\{\Delta_k\hat{\phi}^{1,N}< 0\}}\,\Delta_k\left[{(\hat{\phi}^{1,N})}^- S^{N}\right],\\
\Vs_n^3&:=\sum\limits_{k=1}^{n}\ind_{\{\Delta_k\hat{\phi}^{1,N}< 0\}}\,\hat{\phi}_{k-1}^{1,N}\Delta_k S^{N}.
 \end{align*} 
  Using \eqref{V0} and that $\phi_0^{1,N}=0$, we see that $V_0^{\la_N}(\hat{\phi}^{N}(\lambda_N))=0$. The second term in \eqref{abc} is exactly the value process with $0$ transaction costs for the trading strategy $\hat{\phi}^{N}$ and then, from the results of the previous section we have 
 $$V_n(\hat{\phi}^{N})\geq - \hat{c}_N.$$
For the third term, we proceed as follows. Using that $|\hat{\phi}_k^{1,N}|\leq |\phi_k^{1,N}|$, we obtain that
  \begin{align}\label{aa}
   |\Vs_n^1|&\leq \sum\limits_{k=1}^{n}|\phi^{1,N}_k|S_k^N+\sum\limits_{k=1}^{n}|\phi^{1,N}_{k-1}|S_{k-1}^N\nonumber\\
   &\leq \frac1{N^{1-H}}\left(\sum\limits_{k=1}^{n}|X_k|+\sum\limits_{k=2}^{n}|X_{k-1}|\right).
  \end{align}
  For the latter sums, we use the estimates in Lemma~\ref{eg1} and Lemma~\ref{eji} to obtain
  \begin{align}\label{Xk}
   \sum\limits_{k=1}^{n}|X_k|&\leq\sum\limits_{k=1}^{n}\sum_{l=1}^{k-1}j_k(l)+\sum\limits_{k=1}^{n}g_k\nonumber\\
   &\leq \sum_{k=1}^{n}\left(\frac{C_1}{k^{2-2H}}\sum_{l=1}^{\frac{k}4}\frac1{l^{H-\frac12}}+C_2\sum_{l=1}^{\frac{3k}4}\frac1{l^{\frac32-H}}\right)+2^{H-\frac12}g\,n\nonumber\\
   &\leq \tilde{C}\sum_{k=1}^n k^{H-\frac12}+2^{H-\frac12}g\,
   n\nonumber\\
   &\leq\hat{C}_1\, n^{H+\frac12},
  \end{align}
where $\tilde{C}$ and $\hat{C}_1$ are appropriate strictly positive constants. Similarly, we have
$$\sum\limits_{k=2}^{n}|X_{k-1}|\leq \hat{C}_1\,n^{H+\frac12}.$$
Hence, we deduce that
\begin{align}\label{Ca}
 |\Vs_n^1|\leq\hat{C}_1\frac{n^{H+\frac12}}{N^{1-H}}\leq \hat{C}_1 N^{2H-\frac12}.
\end{align}
For the term $\Vs_n^2$ in \eqref{abc}, we proceed in a similar way, and we obtain, for some constant $\hat{C}_2>0,$ that
\begin{align}\label{Cb}
|\Vs_n^2|\leq \hat{C}_2 N^{2H-\frac12}.
\end{align}
It is left to find an estimate for $\Vs_n^3$. Using \eqref{theta} we write
  \begin{align}\label{C1234}
  |\Vs_n^3|&\leq \sum\limits_{k=1}^{n}|\phi_{k-1}^{1,N}\Delta_k S^N|=\frac1{N}\sum\limits_{k=1}^{n}|X_{k-1}X_k|\nonumber\\
  &\leq\frac1{N}\sum_{k=1}^n\left|\theta_k^{(1)}+\theta_k^{(2)}+\theta_k^{(3)}\right|+\frac1{N}\sum_{k=1}^n\left|\theta_k^{(4)}\right|.
  \end{align}
From Lemma \ref{th12}, we have, for $C_{1,2,3}=C_1^\theta+C_2^\theta+C_3^\theta>0$, that
$$\sup\limits_{1\leq k\leq N}\left|\theta_k^{(1)}+\theta_k^{(2)}+\theta_k^{(3)}\right|\leq C_{1,2,3}\,N^{H-\frac{1}{2}}.$$
  We conclude that
  $$\frac{1}{N}\,\sum_{k=1}^n\left|\theta_k^{(1)}+\theta_k^{(2)}+\theta_k^{(3)}\right|\leq C_{1,2,3} N^{H-\frac{1}{2}}$$
 For the last term in \eqref{C1234}, we first notice that, using Lemma \ref{eji} and performing a similar calculation like in \eqref{Xk}, one gets
 that $\sum_{\ell=1}^{k-1}j_k(\ell)\leq \bar{C} k^{H-\frac12}$, for some constant $\bar{C}>0$. Using this and the definition of $\theta_k^{(4)}$, we obtain
 \begin{align*}
 \frac1{N}\sum_{k=1}^n\left|\theta_k^{(4)}\right|\leq \frac{\bar{C}^2}{N}\sum_{k=1}^n k^{2H-1}\leq \bar{C}^2 N^{2H-1}.
 \end{align*}
Hence, we derived that
\begin{align}\label{Cc}
 |\Vs_n^3|\leq \hat{C}_3 N^{2H-1},
\end{align}
for an appropriate constant $\hat{C}_3>0$.

From \eqref{abc} we deduce that:
\begin{align}\label{vphi}
 V_n^{\la_N}(\hat{\phi}^{N}(\lambda_N))\geq V_n(\hat{\phi}^{N}) -c_* \lambda_N\,N^{2H-\frac12},
\end{align}
for some constant $c_*>0$. 

We return to the self-financing trading strategy $\psi^N$. Thanks to \eqref{vpr}, we get
\begin{equation}\label{vapsi}
 V_n^{\la_N}(\psi^{N}(\lambda_N))\geq V_n(\psi^{N}) -c_*\frac{\lambda_N}{c_N} \,N^{2H-\frac12}.
\end{equation}
Since $\psi^N$ is $c_N$-admissible, we deduce that
\begin{equation*}
 V_n^{\la_N}(\psi^{N}(\lambda_N))\geq-c_N -c_*\frac{\lambda_N}{c_N} \,N^{2H-\frac12}=:-c_N(\lambda_N),
\end{equation*}
or equivalently, that $\psi^N(\lambda_N)$ is $c_N(\lambda_N)$-admissible. Note that, it is enough to choose 
$$\la_N=o\left(\frac{c_N}{N^{2H-\frac12}}\right)=o\left(\frac{\sqrt{\ln{N}}}{N^{(2H-\frac14)\wedge(H+\frac12})}\right),$$
to have $c_N(\lambda_N)\xrightarrow[N\rightarrow\infty]{}0$.

The second condition of Definition~\ref{SAAd} follows immediately. Indeed, defining 
$$C_N(\lambda_N):=C_N-c_*\frac{\lambda_N}{c_N} \,N^{2H-\frac12}\xrightarrow[N\rightarrow\infty]{}\infty,$$
and using \eqref{vapsi}, we obtain that
\begin{align*}
 P\left(V_N^{\la_N}(\psi^{N}(\lambda_N))\geq C_N(\lambda_N)\right)&\geq  P\left(V_N(\psi^{N})\geq C_N\right).\\
\end{align*}
The second condition follows from the properties of $(\psi^N)_{N\geq 1}$, and the desired result is proven.
\end{proof}

\bibliographystyle{plain}
\bibliography{reference}
\end{document}